\DeclareMathOperator*{\vol}{\ensuremath{Vol}}
\DeclareMathOperator*{\RQ}{\ensuremath{RQ}}
\DeclareMathOperator{\id}{Id}
\newcommand{\R}{\ensuremath{\mathbb{R}}}
\theoremstyle{plain}
\newtheorem{theorem}{Theorem}[section]
\newtheorem{lemma}[theorem]{Lemma}
\newtheorem{proposition}[theorem]{Proposition}
\newtheorem{corollary}[theorem]{Corollary}
\theoremstyle{definition}
\newtheorem{definition}[theorem]{Definition}
\newtheorem{ex}[theorem]{Example}
\theoremstyle{remark}
\newtheorem{remark}[theorem]{Remark}
\journal{Discrete Mathematics}
\date{}
\begin{document}

\begin{frontmatter}



\title{Spectral Theory of Laplace Operators on Oriented Hypergraphs}

\author[inst1]{Raffaella Mulas\footnote{Corresponding author}}

\affiliation[inst1]{organization={Max Planck Institute for Mathematics in the Sciences},
            addressline={}, 
            city={Leipzig},
            postcode={D–04103}, 
            state={},
            country={Germany}}

\author[inst1]{Dong Zhang}

\begin{abstract}
Several new spectral properties of the normalized Laplacian defined for oriented hypergraphs are shown. The eigenvalue $1$ and the case of duplicate vertices are discussed; two Courant nodal domain theorems are established; new quantities that bound the eigenvalues are introduced. In particular, the Cheeger constant is generalized and it is shown that the classical Cheeger bounds can be generalized for some classes of hypergraphs; it is shown that a geometric quantity used to study zonotopes bounds the largest eigenvalue from below, and that the notion of coloring number can be generalized and used for proving a Hoffman-like bound. Finally, the spectrum of the unnormalized Laplacian for Cartesian products of hypergraphs is discussed.
\end{abstract}


\begin{keyword}
Oriented Hypergraphs \sep Spectral Theory \sep Laplace Operator \sep Cheeger inequality \sep Hoffman bound \sep Chromatic number
\end{keyword}

\end{frontmatter}

\section{Introduction}
The \emph{oriented hypergraphs} introduced by Shi \cite{Shi92} are hypergraphs with the additional structure that each vertex in a hyperedge is either an input or an output. Such structure allows modeling many real networks, as for instance chemical reaction networks, metabolic networks, neural networks, synchronization networks. The \emph{adjacency} and \emph{unnormalized Laplacian} matrices of oriented hypergraphs were introduced by Reff and Rusnak \cite{ReffRusnak} and the study of their spectral properties has received a lot of attention \cite{orientedhyp2013,orientedhyp2014,orientedhyp2016,orientedhyp2017,orientedhyp2018,orientedhyp2019,orientedhyp2019-2,orientedhyp2019-3}. The \emph{normalized Laplacian} has been established in \cite{Hypergraphs}, and various spectral properties, as well as possible applications, have been studied in \cite{Hypergraphs,Sharp,Master-Stability}. In this work, we bring forward the study of the spectrum for both the normalized and the unnormalized Laplacian in the context of oriented hypergraphs, with a special focus on the first one.

The paper is structured as follows. Section \ref{section:prel} provides an overview of the preliminaries needed in order to discuss the main results. The next five sections focus on the normalized Laplacian. In particular, in Section \ref{section:eigenvalue1} we discuss the eigenvalue $1$ and the case of duplicate vertices; in Section \ref{Section:Courant} we prove two versions of the Courant nodal domain theorem and in Section \ref{section:Cheeger} we discuss the problem of generalizing the Cheeger inequalities to the case of the smallest non-zero eigenvalue of the normalized Laplacian. In Section \ref{section:generalbounds} we prove some general bounds for both the smallest nonzero eigenvalue and the largest eigenvalue, while in Section \ref{section:coloring} we generalize the definition of coloring number and we discuss some of its properties. Finally, in Section \ref{section:unnormalized}, we study the spectrum of the unnormalized Laplacian when considering the Cartesian product of oriented hypergraphs.

\section{Preliminaries}\label{section:prel}
We discuss the preliminaries needed in order to state the main results. In particular, in Section \ref{section:hyp} we present an overview of the basic definitions regarding oriented hypergraphs; in Section \ref{section:operators} we provide an overview of the operators associated to such hypergraphs; in Section \ref{section:minmax} we characterize the eigenvalues of the normalized Laplacian using the min-max principle. Finally, in Section \ref{section:transformations}, we discuss two kinds of hypergraph transformations.
\subsection{Oriented hypergraphs}\label{section:hyp}
\begin{definition}[\cite{ReffRusnak}]
				An \emph{oriented hypergraph} is a pair $\Gamma=(V,H)$ such that $V$ is a finite set of vertices and $H$ is a set such that every element $h$ in $H$ is a pair of disjoint elements $(h_{in},h_{out})$ (input and output) in $\mathcal{P}(V)$. The elements of $H$ are the \emph{oriented hyperedges}. Changing the orientation of a hyperedge $h$ means exchanging its input and output, leading to the pair $(h_{out},h_{in})$. With a little abuse of notation, we shall see $h$ as $h_{in}\cup h_{out}$.
			\end{definition}
			   \begin{definition}
 Given $h\in H$, two vertices $i$ and $j$ are \emph{co-oriented} in $h$ if they belong to the same orientation sets of $h$; they are \emph{anti-oriented} in $h$ if they belong to different orientation sets of $h$.
\end{definition}

\begin{definition}[\cite{Sharp}]
The \emph{degree} of a vertex $i$ is
	\begin{equation*}
	    \deg(i):=\#\text{ hyperedges containing $i$}.
	\end{equation*}The \emph{cardinality} of a hyperedge $h$ is
	\begin{equation*}
	    \# h:=\#(h_{in}\cup h_{out}).	\end{equation*}
\end{definition}

From now on, we fix an oriented hypergraph $\Gamma=(V,H)$ on $n$ vertices $1,\ldots,n$ and $m$ hyperedges $h_1,\ldots, h_m$. We assume that $\Gamma$ has no vertices of degree zero.

 \begin{definition}
    The oriented hypergraph $\Gamma$ is \emph{$d$--regular} if $\deg (i)=d$ for each $i\in V$. $\Gamma$ is \emph{$m$--uniform} if $\#h=m$ for each $h\in H$.
    \end{definition}
    
    \begin{definition}\label{def:c-complete}
    The oriented hypergraph $\Gamma$ is \emph{$c$--complete} for some $c\geq 1$ if, forgetting about the additional structure of inputs and outputs, it has all possible ${n\choose c}$ hyperedges of cardinality $c$. 
    \end{definition}
    \begin{ex}
   Every graph is $2$--uniform. The complete graph is $2$--complete according to Definition \ref{def:c-complete}.
    \end{ex}
    
\begin{definition}[\cite{Hypergraphs}]
The oriented hypergraph $\Gamma$ is \emph{connected} if, for every pair of vertices $v,w\in V$, there exists a path that connects $v$ and $w$, i.e. there exist $v_1,\ldots,v_k\in V$ and $h_1,\ldots,h_{k-1}\in H$ such that $v_1=v$, $v_k=w$, and $\{v_i,v_{i+1}\}\subseteq h_i$ for each $i=1,\ldots,k-1$.
\end{definition}

	\begin{definition}[\cite{Hypergraphs}]
				 The oriented hypergraph $\Gamma=(V,H)$ has $k$ \emph{connected components} if there exist $\Gamma_1=(V_1,H_1),\ldots,\Gamma_k=(V_k,H_k)$ such that:
				\begin{enumerate}
					\item For every $i\in\{1,\ldots,k\}$, $\Gamma_i$ is a connected hypergraph with $V_i\subseteq V$ and $H_i\subseteq H$;
					\item For every $i,j\in\{1,\ldots,k\}$, $i\neq j$, $V_i\cap V_j=\emptyset$ and therefore also $H_i\cap H_j=\emptyset$;
\item $\bigcup V_i=V,\ \  \bigcup H_i =H$.
				\end{enumerate}
			\end{definition}

\subsection{Operators on oriented hypergraphs}\label{section:operators}
\begin{definition}[\cite{Hypergraphs}]
The $n\times m$ \emph{incidence matrix} of $\Gamma$ is $\mathcal{I}:=(\mathcal{I}_{ih})_{i\in V, h\in H}$, where
	\begin{equation*} 
	\mathcal{I}_{ih}:=\begin{cases} 1 & \text{ if }i\in h_{in}\\ -1 & \text{ if }i\in h_{out}\\ 0 & \text{otherwise.} \end{cases}
	\end{equation*}
\end{definition}
 
\begin{definition}[\cite{ReffRusnak}]
       The $n\times n$ diagonal \emph{degree matrix} $D:=D(\Gamma)$ is defined by
    \begin{equation*}
    D_{ij}:=\begin{cases} \deg (i) & \text{if }i=j\\ 0 & \text{otherwise}. \end{cases} 
\end{equation*}
    \end{definition}
 \begin{definition}[\cite{ReffRusnak}]\label{def:adjacencymatrix}
        The $n\times n$ \emph{adjacency matrix} is $A:=A(\Gamma)$, where $A_{ii}:=0$ for each $i=1,\ldots,n$ and
\begin{align*}
        A_{ij}:=& \# \{\text{hyperedges in which }i \text{ and }j\text{ are anti-oriented}\}\\
        &-\# \{\text{hyperedges in which }i \text{ and }j\text{ are co-oriented}\}
\end{align*}for $i\neq j$.
    \end{definition}

\begin{definition}[\cite{Hypergraphs}]\label{def:scalarfg}Let $C(V)$ be the space of functions $f:V\rightarrow\mathbb{R}$, endowed with the scalar product 
\begin{equation*}
    (f,g):=\sum_{i\in V}\deg(i) f(i) g(i).
\end{equation*}The \emph{(normalized) Laplacian} associated to $\Gamma$ is the operator
	\begin{equation*}
	    L:C(V)\rightarrow C(V)
	\end{equation*}such that, given $f:V\rightarrow\mathbb{R}$ and given $i\in V$,
\begin{align*}
Lf(i):=&\frac{\sum_{h: i\text{ input}}\biggl(\sum_{i' \text{ input of }h}f(i')-\sum_{j' \text{ output of }h}f(j')\biggr)}{\deg(i)}\\
&-\frac{\sum_{\hat{h}: i\text{ output}}\biggl(\sum_{\hat{i} \text{ input of }\hat{h}}f(\hat{i})-\sum_{\hat{j} \text{ output of }\hat{h}}f(\hat{j})\biggr)}{\deg(i)}.
\end{align*}
	\end{definition}
	\begin{remark}
Note that, as well as the graph normalized Laplacian, $L$ can be rewritten in a matrix form as
\begin{equation*}
    L=\id-D^{-1}A,
\end{equation*}where $\id$ is the $n\times n$ identity matrix. To see this, observe that, given $f:V\rightarrow\mathbb{R}$ and $i\in V$,
\begin{align*}
    Lf(i)=&\frac{\sum_{h: i\text{ input}}\biggl(\sum_{i' \text{ input of }h}f(i')-\sum_{j' \text{ output of }h}f(j')\biggr)}{\deg(i)}\\
&-\frac{\sum_{\hat{h}: i\text{ output}}\biggl(\sum_{\hat{i} \text{ input of }\hat{h}}f(\hat{i})-\sum_{\hat{j} \text{ output of }\hat{h}}f(\hat{j})\biggr)}{\deg(i)}\\
=&\frac{\deg(i)f(i)-\sum_{j\neq i}A_{ij}f(j)}{\deg(i)}\\
=&f(i)-\frac{1}{\deg(i)}\sum_{j\neq i}A_{ij}f(j).
\end{align*}
	\end{remark}
$L$ is not necessarily a symmetric matrix, but it is a symmetric operator with respect to the scalar product that we use. Also, if we generalize the symmetric normalized Laplacian introduced by Chung \cite{Chung} as
\begin{equation*}
    \mathcal{L}:=\id-D^{-1/2}AD^{-1/2},
\end{equation*}it is easy to see that $L=D^{-1/2}\mathcal{L}D^{1/2}$, therefore $L$ and $\mathcal{L}$ are similar. In particular, they have the same eigenvalues counted with multiplicity. This allows us to apply the theory of symmetric matrices in order to study the eigenvalues of $L$.
\begin{remark}\label{remark:trace}
It is easy to see that the trace of $L$ is equal to $n$. Therefore, also the sum of its eigenvalues is equal to $n$.
\end{remark}	
	
\begin{definition}[\cite{ReffRusnak}]\label{def:unnormalized-Laplacian}
The \emph{unnormalized Laplacian} associated to $\Gamma$ is the operator
	\begin{equation*}
	    \Delta:C(V)\rightarrow C(V)
	\end{equation*}such that, given $f:V\rightarrow\mathbb{R}$ and given $i\in V$,
\begin{align*}
\Delta f(i):=&\sum_{h\in H\,:\, h_{in}\ni i}\left(\sum_{j'\in h_{in}}f(j')-\sum_{j\in h_{out}}f(j)\right)\\
&-\sum_{h\in H\,:\, h_{out}\ni i}\left(\sum_{j'\in h_{in}}f(j')-\sum_{j\in h_{out}}f(j)\right).
\end{align*}

\end{definition}
\begin{remark}The unnormalized Laplacian $\Delta$ can be written in a matrix form as $\Delta=D-A$.

Also, the Laplacian $L$ is such that \begin{equation*}Lf(i)=\frac{1}{\deg(i)}\cdot\Delta f(i) \quad\text{for all }i\in V.\end{equation*}Therefore, it is easy to see that, if $\Gamma$ is $d$--regular,
\begin{align*}
    \lambda \text{ is an eigenvalue for }\Delta &\iff \frac{\lambda}{d} \text{ is an eigenvalue for }L\\
    &\iff d-\lambda \text{ is an eigenvalue for }A.
\end{align*}
\end{remark}
\begin{definition}[\cite{Hypergraphs}]\label{def:scalarproductgamma}
Let $C(H)$ be the space of functions $\gamma:H\rightarrow\mathbb{R}$, endowed with the scalar product 
\begin{equation*}
    (\gamma,\tau)_H:=\sum_{h\in H}\gamma(h)\tau(h).
\end{equation*}The \emph{hyperedge-Laplacian} associated to $\Gamma$ is the operator
	\begin{equation*}
	    L^H:C(H)\rightarrow C(H)
	\end{equation*}such that, given $\gamma:H\rightarrow\mathbb{R}$ and given $h\in H$,
\begin{align*}
L^H\gamma(h):=&\sum_{i \text{ input of }h}\frac{\sum_{h': i\text{ input}}\gamma(h')-\sum_{h'': i\text{ output}}\gamma(h'')}{\deg (i)}\\
&-\sum_{j \text{ output of }h}\frac{\sum_{\hat{h}': j\text{ input}}\gamma(\hat{h}')-\sum_{\hat{h}'': j\text{ output}}\gamma(\hat{h}'')}{\deg (j)}.
\end{align*}
\end{definition}
\subsection{Min-max principle}\label{section:minmax}
We recall that $L$ has $n$ real, non-negative eigenvalues that we denote by
\begin{equation*}
    \lambda_1\leq \ldots\leq\lambda_n.
\end{equation*}Analogously, $L^H$ has $m$ real, non-negative eigenvalues,
\begin{equation*}
    \mu_1\leq \ldots\leq\mu_m.
\end{equation*}As shown in \cite{Hypergraphs}, the non-zero spectrum of $L$ and $L^H$ coincides. Also, the multiplicity of the eigenvalue $0$ for $L$, denoted $m_V$, and the multiplicity of $0$ for $L^H$, denoted $m_H$, are such that
\begin{equation*}
    m_V-m_H=n-m.
\end{equation*}
By the Courant-Fischer-Weyl min-max principle, we can characterize all eigenvalues of $L$ and $L^H$ as follows. Given a function $f\in C(V)$, its \emph{Rayleigh Quotient} is
\begin{equation*}
    \RQ(f)=\frac{\sum_{h\in H}\left(\sum_{i\text{ input of }h}f(i)-\sum_{j\text{ output of }h}f(j)\right)^2}{\sum_{i\in V}\deg(i)f(i)^2}
\end{equation*}and similarly, given $\gamma\in C(H)$,
\begin{equation*}
    \RQ(\gamma)=\frac{\sum_{i\in V}\frac{1}{\deg (i)}\cdot \biggl(\sum_{h': i\text{ input}}\gamma(h')-\sum_{h'': i\text{ output}}\gamma(h'')\biggr)^2}{\sum_{h\in H}\gamma(h)^2}.
\end{equation*}By the min-max principle, for $k=1,\ldots,n$,
\begin{align*}
    \lambda_k&=\min_{\substack{f\in C(V),\,(f,f_j)=0\\j=1,\ldots,k-1}}\RQ(f)\\
    &=\max_{\substack{f\in C(V),\,(f,f_l)=0\\l=k,\ldots,n}}\RQ(f),
\end{align*}where each $f_j$ is an eigenfunction of $\lambda_j$. Also, the functions $f_k$ realizing such a minimum or maximum are the corresponding eigenfunctions of $\lambda_k$. Analogously, for $k=1,\ldots,m$,
\begin{align*}
    \mu_k&=\min_{\substack{\gamma\in C(H),\,(\gamma,\gamma_j)_H=0\\j=1,\ldots,k-1}}\RQ(\gamma)\\
    &=\max_{\substack{\gamma\in C(H),\,(\gamma,\gamma_l)_H=0\\l=k,\ldots,m}}\RQ(\gamma),
\end{align*}where each $\gamma_j$ is an eigenfunction of $\mu_j$, and the functions $\gamma_k$ realizing such a minimum or maximum are the corresponding eigenfunctions of $\mu_k$.
In particular,
\begin{equation*}
    \lambda_1=\min_{f\in C(V)}\RQ(f),\quad \mu_1=\min_{\gamma\in C(H)}\RQ(\gamma)
    \end{equation*}
    and
    \begin{equation*}
    \lambda_n=\max_{f\in C(V)}\RQ(f)=\max_{\gamma\in C(H)}\RQ(\gamma).
\end{equation*}
\subsection{Hypergraph transformations}\label{section:transformations}
\begin{definition}[\cite{orientedhyp2014}]
Given $\hat{v}\in V$, we let $\Gamma- \hat{v}:=(\hat{V},\hat{H})$, where:
\begin{itemize}
    \item $\hat{V}=V\setminus\{\hat{v}\}$, and
    \item $\hat{H}=\{h\setminus\{\hat{v}\}:h\in H\}$.
\end{itemize}$\Gamma- \hat{v}$ is obtained from $\Gamma$ by a \emph{weak vertex deletion} of $\hat{v}$. $\Gamma$ is obtained from $\Gamma- \hat{v}$ by a \emph{weak vertex addition} of $\hat{v}$. We also allow empty hyperedges.
\end{definition}
\begin{lemma}\label{lemma:Cauchy}If $\hat{\Gamma}$ is obtained from $\Gamma$ by weak-deleting $r$ vertices,
\begin{equation*}
        \lambda_{k}(\Gamma)\leq \lambda_k(\hat{\Gamma})\leq \lambda_{k+r}(\Gamma)\quad\text{for all }k\in\{1,\ldots,n-r\}.
    \end{equation*}
\end{lemma}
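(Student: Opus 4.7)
The plan is to reduce the statement to the classical Cauchy interlacing theorem for principal submatrices of symmetric matrices, applied to $\mathcal{L}=\id-D^{-1/2}AD^{-1/2}$. Recall that the excerpt established $L=D^{-1/2}\mathcal{L}D^{1/2}$, so $L$ and $\mathcal{L}$ share the same spectrum with multiplicities; hence it is enough to prove the inequalities for the eigenvalues of $\mathcal{L}$.

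First I would verify that weak vertex deletion behaves well on the matrix level. For any vertex $i\in V\setminus\{\hat v\}$, the hyperedges containing $i$ are in bijection with the hyperedges of $\hat\Gamma$ containing $i$ (the only thing that changes is that $\hat v$ is dropped, which does not affect whether $i$ is in the hyperedge), so $\deg_{\hat\Gamma}(i)=\deg_\Gamma(i)$. Similarly, for $i,j\in V\setminus\{\hat v\}$ with $i\ne j$, the set of hyperedges containing both $i$ and $j$ is unchanged, and their co/anti-orientation relationship is unchanged, so $A(\hat\Gamma)_{ij}=A(\Gamma)_{ij}$. Consequently
\[
\mathcal{L}(\hat\Gamma)=\big(\mathcal{L}(\Gamma)\big)_{V\setminus\{\hat v\},\,V\setminus\{\hat v\}},
\]
that is, $\mathcal{L}(\hat\Gamma)$ is exactly the principal submatrix of $\mathcal{L}(\Gamma)$ obtained by deleting the row and column indexed by $\hat v$. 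By iterating this observation over the $r$ deleted vertices, the general $\mathcal{L}(\hat\Gamma)$ is the $(n-r)\times(n-r)$ principal submatrix of $\mathcal{L}(\Gamma)$ indexed by the surviving vertices.

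Then I would invoke Cauchy's interlacing theorem: if $B$ is an $(n-r)\times(n-r)$ principal submatrix of a real symmetric $n\times n$ matrix $M$, and the eigenvalues are ordered increasingly, then $\lambda_k(M)\le\lambda_k(B)\le\lambda_{k+r}(M)$ for every $k\in\{1,\ldots,n-r\}$. Applied to $M=\mathcal{L}(\Gamma)$ and $B=\mathcal{L}(\hat\Gamma)$, this is precisely the claimed chain of inequalities.

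Alternatively, if one prefers a proof directly in the Rayleigh-quotient language of Section~\ref{section:minmax}, the same conclusion can be reached by noting that the extension-by-zero map $\iota:C(\hat V)\to C(V)$, $(\iota f)(\hat v)=0$, is an isometry for the weighted scalar product (since degrees are preserved) and satisfies $\RQ_\Gamma(\iota f)=\RQ_{\hat\Gamma}(f)$ (since a hyperedge $h\ni\hat v$ contributes the same square as $h\setminus\{\hat v\}$ once $f(\hat v)=0$). The lower bound then follows by taking $W=\iota(\mathrm{span}(f_1,\dots,f_k))$ in the min--max characterization of $\lambda_k(\Gamma)$, and the upper bound by intersecting the span of the first $k+r$ eigenfunctions of $L(\Gamma)$ with the kernel of the evaluation at the $r$ deleted vertices, which cuts the dimension by at most $r$ and produces a $k$-dimensional test space for $\lambda_k(\hat\Gamma)$. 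The only mild subtlety, and the single step that deserves a careful line in the proof, is the invariance $\deg_{\hat\Gamma}(i)=\deg_\Gamma(i)$ and $A(\hat\Gamma)_{ij}=A(\Gamma)_{ij}$ under weak vertex deletion; once this is noted, everything else is immediate from either Cauchy interlacing or min--max.
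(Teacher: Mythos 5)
Your proposal is correct and follows essentially the same route as the paper, which likewise invokes the Cauchy interlacing theorem for the symmetric matrix and handles $r$ deletions by iteration; you merely make explicit the (correct) observation that weak vertex deletion preserves degrees and adjacency entries, so that $\mathcal{L}(\hat\Gamma)$ is a principal submatrix of $\mathcal{L}(\Gamma)$. The paper's proof is just a one-line citation of interlacing plus induction, so your added detail and the alternative Rayleigh-quotient argument are elaborations, not a different method.
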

\begin{proof}
By the Cauchy Interlacing Theorem \cite[Theorem 4.3.17]{MatrixAnalysis},
\begin{equation*}
    \lambda_{k}(\Gamma)\leq \lambda_k(\Gamma- \hat{v})\leq \lambda_{k+1}(\Gamma)\quad\text{for all }k\in\{1,\ldots,n-1\}.
    \end{equation*}By induction, one proves the claim.
\end{proof}
\begin{definition}[\cite{ReffRusnak}]
Let $\Gamma=(V,H)$ be an oriented hypergraph with $V=\{v_1,\ldots,v_n\}$ and $H=\{h_1,\ldots,h_m\}$. We construct the \emph{dual hypergraph} of $\Gamma$ as $\Gamma^{\top}:=(V',H')$, where $V'=\{v'_1,\ldots,v'_m\}$, $H'=\{h'_1,\ldots,h'_n\}$ and
    \begin{equation*}
 v'_j\in h'_i \text{ as input (resp. output)}\iff v_i\in h_j \text{ as input (resp. output)}.
    \end{equation*}
 \begin{remark}
   Clearly, $\mathcal{I}(\Gamma^{\top})=\mathcal{I}(\Gamma)^\top$, $m_V(\Gamma)=m_H(\Gamma^{\top})$ and $m_H(\Gamma)=m_V(\Gamma^{\top})$. For some hypergraphs, as we shall see in Lemma \ref{lemma:dm} below, also the non-zero eigenvalues of $\Gamma$ and $\Gamma^{\top}$ are related.
 
 \end{remark}
\end{definition}

    \begin{lemma}\label{lemma:dm}Let $\Gamma$ be $d$--regular and $m$--uniform. Then, 
    \begin{equation*}
        \lambda \text{ is an eigenvalue for } \Gamma \,\iff\, \frac{d}{m}\lambda \text{ is an eigenvalue for } \Gamma^{\top}.
    \end{equation*}
    \end{lemma}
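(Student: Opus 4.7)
The plan is to lift both normalized Laplacians back to the common incidence matrix $\mathcal{I}=\mathcal{I}(\Gamma)$ and to invoke the standard fact that $\mathcal{I}\mathcal{I}^\top$ and $\mathcal{I}^\top\mathcal{I}$ share the same nonzero spectrum (with multiplicities).

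First I would verify directly from the definitions that $\Delta(\Gamma)=\mathcal{I}\mathcal{I}^\top$. The diagonal entry $(\mathcal{I}\mathcal{I}^\top)_{ii}=\sum_{h\in H} \mathcal{I}_{ih}^2$ counts the hyperedges containing $i$, which is $\deg(i)$. For $i\neq j$, the product $\mathcal{I}_{ih}\mathcal{I}_{jh}$ equals $+1$ exactly when $i$ and $j$ are co-oriented in $h$ and $-1$ when they are anti-oriented, so summing over $h\in H$ yields $\#\{\text{co-oriented}\}-\#\{\text{anti-oriented}\}=-A_{ij}$. Hence $\mathcal{I}\mathcal{I}^\top=D-A=\Delta(\Gamma)$. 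Applying the same identity to $\Gamma^\top$ and using $\mathcal{I}(\Gamma^\top)=\mathcal{I}^\top$ gives $\Delta(\Gamma^\top)=\mathcal{I}^\top\mathcal{I}$.

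Next I would use the regularity/uniformity hypotheses to collapse both degree matrices to scalars. Since $\Gamma$ is $d$-regular, $D(\Gamma)=d\,\id$, so $L(\Gamma)=\tfrac{1}{d}\mathcal{I}\mathcal{I}^\top$. The $m$-uniformity of $\Gamma$ forces every vertex of $\Gamma^\top$ to have degree $m$, so $D(\Gamma^\top)=m\,\id$ and $L(\Gamma^\top)=\tfrac{1}{m}\mathcal{I}^\top\mathcal{I}$.

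Finally, I would chain equivalences: $\lambda$ is a nonzero eigenvalue of $L(\Gamma)$ iff $d\lambda$ is a nonzero eigenvalue of $\mathcal{I}\mathcal{I}^\top$ iff $d\lambda$ is a nonzero eigenvalue of $\mathcal{I}^\top\mathcal{I}$ iff $\tfrac{d}{m}\lambda$ is a nonzero eigenvalue of $L(\Gamma^\top)$. The zero eigenvalue case is handled by comparing $\ker\mathcal{I}^\top$ and $\ker\mathcal{I}$, which is exactly the content of the multiplicity balance $m_V-m_H=n-m$ already recorded in Section~\ref{section:minmax}. The only substantive input is the factorization $\Delta=\mathcal{I}\mathcal{I}^\top$, a short sign-count; after that the statement is pure linear algebra and I do not expect a serious obstacle.
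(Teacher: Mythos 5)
Your proof is correct, and it takes a more matrix--algebraic route than the paper's. The paper argues entirely through Rayleigh quotients: $d$--regularity lets it pull a factor $1/d$ out of the Rayleigh quotient of $L(\Gamma)$, and $m$--uniformity (which makes $\Gamma^\top$ $m$--regular) lets it identify the Rayleigh quotient of $L(\Gamma^\top)$, evaluated on functions on $V'=H$, as $\tfrac{d}{m}$ times the Rayleigh quotient of the hyperedge--Laplacian $L^H(\Gamma)$; it then leans on the fact recalled in Section~\ref{section:minmax} that $L$ and $L^H$ share their nonzero spectrum. Your factorization $\Delta(\Gamma)=\mathcal{I}\mathcal{I}^\top$, $\Delta(\Gamma^\top)=\mathcal{I}^\top\mathcal{I}$ (the sign count against Definition~\ref{def:adjacencymatrix} checks out: $\sum_h\mathcal{I}_{ih}\mathcal{I}_{jh}=\#\{\text{co-oriented}\}-\#\{\text{anti-oriented}\}=-A_{ij}$), combined with the standard statement that $\mathcal{I}\mathcal{I}^\top$ and $\mathcal{I}^\top\mathcal{I}$ have the same nonzero spectrum with multiplicities, is the same phenomenon made explicit --- the $L$ versus $L^H$ coincidence the paper invokes is itself an instance of it. What your version buys is self-containedness (no appeal to the cited $L$/$L^H$ result), equality of multiplicities of the nonzero eigenvalues for free, and a cleaner use of the hypotheses as the two scalar identities $D(\Gamma)=d\,\id$ and $D(\Gamma^\top)=m\,\id$.

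One caveat concerns your final sentence on $\lambda=0$, which is where the statement is genuinely delicate rather than ``handled.'' One has $\dim\ker\mathcal{I}^\top=n-\operatorname{rank}\mathcal{I}$ while $\dim\ker\mathcal{I}=\#H-\operatorname{rank}\mathcal{I}$, and $d$--regularity together with $m$--uniformity forces $dn=m\cdot\#H$; so whenever $d\neq m$ the two kernels have different dimensions and $0$ can be an eigenvalue of exactly one of $\Gamma$, $\Gamma^\top$. For instance, a single hyperedge on three input vertices is $1$--regular and $3$--uniform with spectrum $\{0,0,3\}$ for $L(\Gamma)$, while $L(\Gamma^\top)$ has spectrum $\{1\}$: the nonzero eigenvalue scales by $d/m=1/3$ as claimed, but $0$ does not transfer. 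So the equivalence really holds only for $\lambda\neq 0$. This is a defect of the lemma as stated rather than of your argument --- the paper's Rayleigh-quotient proof has exactly the same blind spot --- and your factorization has the merit of exposing it via the kernel dimensions instead of hiding it.
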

  
    \begin{proof}
    The eigenvalues of $\Gamma$ are the min-max of the Rayleigh Quotient 
    \begin{equation*}
      \RQ(f:V\rightarrow\mathbb{R})=\frac{1}{d} \cdot \frac{\sum_{h\in H}\biggl(\sum_{v_{\text{in}}\in h \text{ input}}f(v_{\text{in}})-\sum_{v_{\text{out}}\in h \text{ output}}f(v_{\text{out}})\biggr)^2}{\sum_{i\in V} f(i)^2}.
    \end{equation*}On the other hand, the eigenvalues of $\Gamma^{\top}$ are the min-max of 
    \begin{align*}
      \RQ(\gamma:H\rightarrow\mathbb{R})&=\frac{1}{m} \cdot \frac{\sum_{v'\in V'}\biggl(\sum_{h'_{\text{in}}: v\text{ input}}\gamma(h'_{\text{in}})-\sum_{h'_{\text{out}}: v'\text{ output}}\gamma(h'_{\text{out}})\biggr)^2}{\sum_{h'\in H'}\gamma(h')^2}\\
      &=\frac{d}{m} \cdot \RQ(f:V\rightarrow\mathbb{R}).
    \end{align*}
    \end{proof}
      \begin{remark}In the case of graphs, Lemma \ref{lemma:dm} is trivial because every graph is $2$--uniform, therefore if a $d$--regular graph has a dual graph it must have $d=m=2$.
    \end{remark}
\section{Eigenvalue 1 and duplicate vertices}\label{section:eigenvalue1}
In the case of graphs, it is well-known that \emph{duplicate vertices}, that is, vertices that share the same neighbors, produce the eigenvalue $\lambda=1$ \cite{duplicationgraphs}. We show that this is true also for the more general case of hypergraphs.
\begin{lemma}
$1$ is an eigenvalue for $L$ with eigenfunction $f$ if and only if $0$ is an eigenvalue for $A$ with eigenfunction $f$. In particular, the multiplicity of $1$ for $L$ equals the multiplicity of $0$ for $A$.
\end{lemma}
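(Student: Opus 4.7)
The plan is to use directly the matrix identity $L=\id-D^{-1}A$ established in the remark after Definition~\ref{def:scalarfg}, together with the standing assumption that every vertex has positive degree, which makes $D$ invertible.

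First I would rewrite the eigenvalue equation $Lf=f$ as
\begin{equation*}
(\id-D^{-1}A)f=f \iff D^{-1}Af=0.
\end{equation*}
Since $D$ is a diagonal matrix with strictly positive entries $\deg(i)>0$ for every $i\in V$, it is invertible, so $D^{-1}Af=0$ if and only if $Af=0$. This proves the biconditional at the level of individual eigenfunctions: $Lf=f$ is equivalent to $Af=0$, which is precisely the statement that $1$ is an eigenvalue of $L$ with eigenfunction $f$ if and only if $0$ is an eigenvalue of $A$ with eigenfunction $f$.

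For the multiplicity statement, I would observe that the argument above shows the set equality
\begin{equation*}
\ker(L-\id)=\ker(A)
\end{equation*}
as subspaces of $C(V)$. Since the multiplicity of $1$ as an eigenvalue of $L$ is $\dim\ker(L-\id)$ (using that $L$ is similar to the symmetric matrix $\mathcal{L}$, so algebraic and geometric multiplicities agree) and the multiplicity of $0$ as an eigenvalue of the symmetric matrix $A$ is $\dim\ker(A)$, the two multiplicities coincide.

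I don't anticipate any real obstacle here: the only subtle point worth flagging explicitly is the invertibility of $D$, which relies on the standing assumption that $\Gamma$ has no vertices of degree zero, and the fact that both $L$ and $A$ have all multiplicities equal to the dimensions of the corresponding eigenspaces (for $L$ this is because $L$ is similar to the symmetric operator $\mathcal{L}$, and for $A$ because $A$ is symmetric by Definition~\ref{def:adjacencymatrix}).
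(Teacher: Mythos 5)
Your argument is correct and is essentially identical to the paper's own proof, which also passes through the chain of equivalences $Lf=f\iff(\id-D^{-1}A)f=f\iff Af=0$ using $L=\id-D^{-1}A$. Your added remarks on the invertibility of $D$ and on multiplicities being kernel dimensions merely make explicit what the paper leaves implicit.
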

\begin{proof}Observe that
\begin{equation*}
    Lf=f\iff (\id-D^{-1}A)f=f\iff \id f-D^{-1}Af=f\iff Af=0.
\end{equation*}This proves the claim.
\end{proof}
\begin{definition}\label{def:duplicate}
Two vertices $i$ and $j$ are \emph{duplicate vertices} if the corresponding rows/columns of the adjacency matrix are the same, that is, $A_i=A_j$ and therefore
\begin{equation*}
    A_{il}=A_{jl}\quad\text{for each }l\in V.
\end{equation*}In particular, $A_{ij}=A_{jj}=0$.
\end{definition}
\begin{remark}
In the case of graphs, Definition \ref{def:duplicate} coincides with the usual definition of duplicate vertices.
\end{remark}
\begin{lemma}\label{lemma:duplicate}
If $i$ and $j$ are duplicate vertices, let $f:V\rightarrow\mathbb{R}$ be such that $f(i)=-f(j)\neq 0$ and $f=0$ otherwise. Then, $Lf=f$, that is, $1$ is an eigenvalue and $f$ is a corresponding eigenfunction.
\end{lemma}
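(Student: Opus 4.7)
The plan is to reduce the statement to showing $Af=0$ via the previous lemma, and then compute $Af$ componentwise using the duplicate condition.

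First, I would invoke the preceding lemma: since $L=\id-D^{-1}A$, the identity $Lf=f$ is equivalent to $Af=0$. So the problem reduces to verifying that the specific vector $f$ defined by $f(i)=-f(j)\neq 0$ and $f=0$ elsewhere lies in the kernel of $A$.

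Next, I would evaluate $(Af)_l=\sum_{k\in V}A_{lk}f(k)$ for each $l\in V$. Since $f$ is supported on $\{i,j\}$, this collapses to $(Af)_l = A_{li}f(i)+A_{lj}f(j) = f(i)(A_{li}-A_{lj})$. For $l\notin\{i,j\}$, the adjacency matrix is symmetric (clear from Definition \ref{def:adjacencymatrix}), so $A_{li}=A_{il}$ and $A_{lj}=A_{jl}$; the duplicate condition $A_{il}=A_{jl}$ from Definition \ref{def:duplicate} then forces $(Af)_l=0$. For $l=i$, using $A_{ii}=0$ and the fact that $A_{ij}=A_{jj}=0$ (the remark in Definition \ref{def:duplicate}), we get $(Af)_i=A_{ij}f(j)=0$; and symmetrically $(Af)_j=0$.

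Having shown $Af=0$ entry by entry, we conclude $Lf=f$, as desired. The computation is essentially routine once one notices the symmetry of $A$ and the key observation that duplicate vertices are not adjacent in the weighted sense (i.e., $A_{ij}=0$), which ensures the diagonal entries $(Af)_i$ and $(Af)_j$ also vanish. No real obstacle arises; the only subtle point is to remember to handle the components $l=i$ and $l=j$ separately so that the vanishing argument uses $A_{ij}=0$ rather than symmetry of $A$.
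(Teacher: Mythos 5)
Your proof is correct and follows essentially the same route as the paper's: a componentwise verification that uses $A_{ij}=A_{ii}=0$ for the entries at $i$ and $j$, and the duplicate condition (together with the symmetry of $A$) for the remaining entries. Phrasing it as $Af=0$ via the preceding lemma rather than directly as $Lf=f$ is only a cosmetic difference, since $Lf(l)=f(l)-\frac{1}{\deg(l)}(Af)(l)$.
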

\begin{proof}
It is easy to see that, by definition of $f$,
\begin{itemize}
    \item $Lf(i)=f(i)$,
    \item $Lf(j)=f(j)$, and
    \item For each $l\neq i,j$,
    \begin{equation*}
        Lf(l)=-\frac{1}{\deg (i)}\bigl(A_{li}f(i)+A_{lj}f(j)\bigr)=0=f(l).
    \end{equation*}
\end{itemize}
\end{proof}
\begin{corollary}
If there are $\hat{n}$ duplicate vertices, then the multiplicity of $1$ is at least $\hat{n}-1$.
\end{corollary}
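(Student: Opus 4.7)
The plan is to build $\hat{n}-1$ explicit, linearly independent eigenfunctions for the eigenvalue $1$, each supplied by Lemma \ref{lemma:duplicate} applied to a suitably chosen pair among the duplicate vertices.

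Concretely, let $v_1, v_2, \ldots, v_{\hat n}$ be the duplicate vertices, so their rows in $A$ all coincide and in particular any two of them form a duplicate pair in the sense of Definition \ref{def:duplicate}. For each $k = 2, \ldots, \hat n$, I would define $f_k : V \to \mathbb{R}$ by
\begin{equation*}
f_k(v_1) = 1, \qquad f_k(v_k) = -1, \qquad f_k(w) = 0 \text{ for } w \notin \{v_1, v_k\}.
\end{equation*}
Since $v_1$ and $v_k$ are duplicate vertices, Lemma \ref{lemma:duplicate} gives $L f_k = f_k$, so each $f_k$ is an eigenfunction with eigenvalue $1$.

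It remains to check that $f_2, \ldots, f_{\hat n}$ are linearly independent, which I would do by the obvious coordinate argument: if $\sum_{k=2}^{\hat n} c_k f_k = 0$, then evaluating at $v_j$ for $j \geq 2$ immediately yields $c_j = 0$, since $f_k(v_j) = -\delta_{jk}$ for $k, j \geq 2$. Therefore the eigenspace of $1$ has dimension at least $\hat n - 1$, as claimed.

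There is no real obstacle here; the only thing to be slightly careful about is noticing that having $\hat n$ mutually duplicate vertices means every pair among them is a duplicate pair in the sense of Definition \ref{def:duplicate}, which is what lets us invoke Lemma \ref{lemma:duplicate} for each $(v_1, v_k)$.
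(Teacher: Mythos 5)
Your proof is correct and follows essentially the same approach as the paper: both construct $\hat{n}-1$ linearly independent eigenfunctions supported on pairs of duplicate vertices via Lemma \ref{lemma:duplicate} (the paper uses consecutive pairs $(i,i+1)$ while you anchor all pairs at $v_1$, an immaterial difference). The linear independence check is the same obvious coordinate argument.
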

\begin{proof}
Assume, up to reordering, that $1,\ldots, \hat{n}$ are duplicate vertices. For each $i=1,\ldots,\hat{n}-1$, let $f_i:V\rightarrow\mathbb{R}$ be such that $f_i(i)=1$, $f_i(i+1)=-1$ and $f_i=0$ otherwise. Then, by Lemma \ref{lemma:duplicate} the $f_i$'s are eigenfunctions corresponding to the eigenvalue $1$. Also, $\dim(\mathrm{span}(f_1,\ldots,f_{\hat{n}-1}))=\hat{n}-1$, therefore the multiplicity of $1$ is at least $\hat{n}-1$.
\end{proof}
Also, \cite[Lemma 10]{Cheeger-like-graphs} for duplicate vertices can be generalized as follows.
\begin{lemma}\label{lemduplicate}
If $i$ and $j$ are duplicate vertices and $f$ is an eigenfunction for an eigenvalue $\lambda\neq 1$ of $L$, then $$f(i)=\frac{\deg (j)}{\deg (i)}f(j).$$
\end{lemma}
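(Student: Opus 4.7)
The plan is to translate the eigenvalue equation $Lf=\lambda f$ into a statement about the adjacency matrix $A$ using the identity $L=\id-D^{-1}A$, and then exploit the fact that duplicate vertices have identical rows of $A$.

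More concretely, from $Lf=\lambda f$ and $L=\id-D^{-1}A$ one obtains $D^{-1}Af=(1-\lambda)f$, equivalently
\begin{equation*}
(Af)(l) = (1-\lambda)\deg(l) f(l) \quad \text{for every } l\in V.
\end{equation*}
Applying this at $l=i$ and $l=j$ gives
\begin{equation*}
\sum_{l\in V} A_{il}\,f(l) = (1-\lambda)\deg(i)\,f(i), \qquad \sum_{l\in V} A_{jl}\,f(l) = (1-\lambda)\deg(j)\,f(j).
\end{equation*}

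By Definition \ref{def:duplicate}, $A_{il}=A_{jl}$ for every $l\in V$, so the two left-hand sides are equal. Therefore
\begin{equation*}
(1-\lambda)\deg(i)\,f(i) = (1-\lambda)\deg(j)\,f(j).
\end{equation*}
Since $\lambda\neq 1$, we may cancel the factor $1-\lambda$, which yields $\deg(i)f(i)=\deg(j)f(j)$ and hence the claimed identity $f(i)=\frac{\deg(j)}{\deg(i)}f(j)$.

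I do not expect any real obstacle here: the argument is a direct two-line manipulation once one rewrites $L$ in matrix form. The only subtle point worth emphasizing is the use of the hypothesis $\lambda\neq 1$ to justify the cancellation, which also explains why the conclusion fails precisely on the eigenspace identified in Lemma \ref{lemma:duplicate}.
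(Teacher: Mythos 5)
Your proof is correct and follows essentially the same route as the paper: both rewrite the eigenvalue equation as $(Af)(l)=(1-\lambda)\deg(l)f(l)$, use that duplicate vertices have identical rows of $A$ to equate the two left-hand sides, and cancel the nonzero factor $1-\lambda$. No issues.
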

\begin{proof}An eigenvalue $\lambda$  of $L$ with eigenfunction $f$ satisfies, for each vertex $l$,
\begin{equation*}
    \lambda f(l)=L f(l)=f(l)-\frac{1}{\deg (l)}\sum_{k\neq l}A_{lk}f(k),
\end{equation*}that is,
\begin{equation*}
\frac{1}{\deg (l)}\sum_{k\neq l}A_{lk}f(k)=f(l)(1-\lambda).
\end{equation*}
Therefore, since this is true, in particular, for $i$ and $j$ and by assumption these are duplicate vertices,
\begin{equation*}
    \frac{1}{\deg (i)}\sum_{k\neq i}A_{ik}f(k)=f(i)(1-\lambda)=\frac{\deg (j)}{\deg (i)}f(j) (1-\lambda).
\end{equation*}Since by assumption $\lambda\neq 1$, this implies that $$f(i)=\frac{\deg (j)}{\deg (i)}f(j).$$
\end{proof}

\section{Courant nodal domain theorems}\label{Section:Courant}

We establish two Courant nodal domain theorems for oriented hypergraphs. In particular, in Section \ref{signless-nodal} we prove a signless nodal theorem that holds for all oriented hypergraphs; in Section \ref{positive-nodal} we define positive and negative domains and we establish the corresponding Courant nodal domain theorem for hypergraphs that have only inputs. We refer the reader to \cite{nodalgraphs} for nodal domain theorems on graphs.
\subsection{Signless nodal domain theorem}\label{signless-nodal}

\begin{definition}\label{def:nodal-domain}
Given a function $f:V\to \R$, we let $\mathrm{supp}(f):=\{i\in V: f(i)\ne0\}$ be the support set of $f$. A \emph{nodal domain} of $f$ is a connected component of the hypergraph that has vertex set $V$ and hyperedge set
\begin{equation*}
    H\cap \mathrm{supp}(f):=\{h\cap \mathrm{supp}(f): h\in H\}.
\end{equation*}
\end{definition}

\begin{theorem}\label{thm:Courant-nodal}
If $f$ is an eigenfunction of the $k$-th eigenvalue $\lambda_k$ and this has multiplicity $r$, then the number of nodal domains of $f$ is smaller than or equal to $k+r-1$.
\end{theorem}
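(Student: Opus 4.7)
The strategy is the classical Courant-type contradiction. I would assume that $f$ has $N \geq k+r$ nodal domains. For each nodal domain $D_i$ lying inside $\mathrm{supp}(f)$, define the truncation $f_i := f \cdot \mathbf{1}_{D_i}$. Because their supports are pairwise disjoint, the $f_i$ are linearly independent and mutually orthogonal with respect to the scalar product $(\cdot,\cdot)$ of Definition~\ref{def:scalarfg}. Let $W := \mathrm{span}(f_1, \ldots, f_N)$, an $N$-dimensional subspace of $C(V)$. The aim is to show that every nonzero $g \in W$ satisfies $\RQ(g) = \lambda_k$, and then use the min-max principle of Section~\ref{section:minmax} to derive a dimension contradiction.

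The core step is a local rigidity statement: $(L f_i)(v) = \lambda_k f(v)$ for $v \in D_i$, and $(L f_i)(v) = 0$ for $v \in D_j$ with $j \neq i$. The crucial observation is that two support vertices $u, v$ lying in distinct nodal domains cannot share any hyperedge $h$, for otherwise the restricted hyperedge $h \cap \mathrm{supp}(f)$ would connect them, contradicting $D_i \neq D_j$. Consequently $A_{vu} = 0$ for such pairs. Using $L = \id - D^{-1} A$, the expression $(Lf_i)(v) = f_i(v) - \frac{1}{\deg(v)} \sum_u A_{vu} f_i(u)$ therefore reduces, on $D_i$, to $(L f)(v) = \lambda_k f(v)$ and, on $D_j$ for $j \neq i$, to $0$. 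Combined with the orthogonality of the $f_i$, this yields $(L f_i, f_j) = \lambda_k (f_i, f_i)\,\delta_{ij}$, and hence $\RQ(g) = \lambda_k$ for every nonzero $g \in W$.

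For the dimension count, let $E_{<k}$ denote the span of eigenfunctions for $\lambda_1, \ldots, \lambda_{k-1}$, a subspace of dimension $k-1$. Then $W \cap E_{<k}^{\perp}$ has dimension at least $N - (k-1) \geq r+1$. Any $g$ in this intersection is orthogonal to the lower eigenspaces and satisfies $\RQ(g) = \lambda_k$; expanding $g$ in an orthonormal eigenbasis of $\mathcal{L}$ forces $g$ to lie in the $\lambda_k$-eigenspace, whose dimension is $r$. This gives $r+1 \leq r$, a contradiction, so $N \leq k+r-1$. The main obstacle is the local computation: one must carefully unpack the definition of nodal domain to verify that $A_{vu}$ vanishes on pairs of support vertices lying in distinct nodal domains. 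Once this is established, the remainder of the argument is standard linear algebra.
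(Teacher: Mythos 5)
Your proof is correct and takes essentially the same route as the paper's: both build the span of the restrictions of $f$ to its nodal domains, show that the Rayleigh quotient is identically $\lambda_k$ on that span (your observation that $A_{vu}=0$ for support vertices in distinct domains is the operator-form counterpart of the paper's observation that each restricted hyperedge meets at most one domain), and then derive a contradiction from the resulting $(k+r)$-dimensional subspace. The only cosmetic difference is the endgame: the paper bounds $\lambda_{k+r}\le\max_{g}\RQ(g)=\lambda_k$ directly via min-max, while you intersect with the orthogonal complement of the lower eigenspaces and count dimensions in the $\lambda_k$-eigenspace.
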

\begin{remark}\label{remark:nodal-domain}
For graphs, the above definition of nodal domain does not coincide with the classical one. The reason why we made this choice is that, if we generalize the classical definitions using the positive and negative nodal domains, then Theorem \ref{thm:Courant-nodal} cannot hold. In fact, the usual nodal domain in graph theory is a connected component of the graph that has edge set $H\cap \mathrm{supp}_+(f)$ or $H\cap \mathrm{supp}_-(f)$, where $$\mathrm{supp}_\pm(f):=\{i\in V:\pm f(i)>0\}.$$ But using this definition for hypergraphs, the  number of nodal domains might be too large to satisfy Theorem \ref{thm:Courant-nodal}. A counterexample is shown below.

Let $\Gamma:=(V,H)$, where $V:=\{1,\ldots,8\}$, and
\begin{align*}
    H:=&\{\{1,2,3,4\},\{3,4,5,6\},\{5,6,7,8\},\{7,8,1,2\},\{1,3\},\{1,4\},\{3,5\},\\
    &\{3,6\},\{5,7\},\{5,8\}\},
\end{align*}

with the assumption that all vertices are inputs for all hyperedges in which they are contained. Then, one can check that $m_V=1$. However, the corresponding eigenfunction $f$ of $\lambda_1=0$ defined by $f(1)=f(2)=f(5)=f(6)=1$ and $f(3)=f(4)=f(7)=f(8)=-1$ has $2$ positive nodal domains and $2$ negative nodal domains. Thus, the total number of nodal domains of $f$ is $4$, which is larger than $k+r-1=1+1-1=1$. Definition \ref{def:nodal-domain} can overcome this problem. 

\end{remark}
\begin{proof}[Proof of Theorem \ref{thm:Courant-nodal}]
Suppose the contrary, that is, $f$ is an eigenfunction of $\lambda_k$ with multiplicity $r$, and $f$ has at least $k+r$ nodal domains whose vertex sets are denoted by $V_1,\ldots,V_{k+r}$. For simplicity, we assume that
\begin{equation*}
   \lambda_k=\lambda_{k+1}=\ldots=\lambda_{k+r-1}<\lambda_{k+r}.
\end{equation*}
Consider a linear function-space $X$ spanned by $f|_{V_1},\ldots,f|_{V_{k+r}}$, where the restriction $f|_{V_i}$ is defined by
\begin{equation*}
    f|_{V_i}(j)=\begin{cases}f(j),&\text{ if }j\in V_i,\\ 0,&\text{ if } j\not\in V_i.\end{cases}
\end{equation*}
Since $V_1,\ldots,V_{k+r}$ are pairwise disjoint, $\dim X=k+r$. Given $g\in X\setminus 0$, there exists $(t_1,\ldots,t_{k+r})\ne\vec0$ such that
\begin{equation*}
    g=\sum_{i=1}^{k+r} t_i f|_{V_i}.
\end{equation*}
Hence,
\begin{align*}
&\sum_{h\in H}\left(\sum_{i\text{ input of }h}g(i)-\sum_{j\text{ output of }h}g(j)\right)^2
\\=& \sum_{h\in H}\left(\sum_{i\in h_{in}}\sum_{l=1}^{k+r}t_lf|_{V_l}(i)-\sum_{j\in h_{out}}\sum_{l=1}^{k+r}t_lf|_{V_l}(j)\right)^2
\\=&\sum_{h\in H}\left(\sum_{l=1}^{k+r}t_l\left(\sum_{i\in h_{in}}f|_{V_l}(i)-\sum_{j\in h_{out}}f|_{V_l}(j)\right)\right)^2
\\=&\sum_{h\in H}\sum_{l=1}^{k+r}t_l^2\left(\sum_{i\in h_{in}}f|_{V_l}(i)-\sum_{j\in h_{out}}f|_{V_l}(j)\right)^2 \\
&\;\;\;(\because \forall h,\, \sum_{i\in h_{in}}f|_{V_l}(i)-\sum_{j\in h_{out}}f|_{V_l}(j)\ne 0 \text{ for at most one }l) 
\\=&\sum_{l=1}^{k+r}t_l^2 \sum_{h\in H}\left(\sum_{i\in h_{in}}f|_{V_l}(i)-\sum_{j\in h_{out}}f|_{V_l}(j)\right)^2
\\=&\sum_{l=1}^{k+r}t_l^2 \sum_{h\in H:\, h\cap V_{l}\ne\emptyset}\left(\sum_{i\in h_{in}}f|_{V_l}(i)-\sum_{j\in h_{out}}f|_{V_l}(j)\right)^2
\\=&\sum_{l=1}^{k+r}t_l^2 \sum_{h\in H:\, h\cap V_{l}\ne\emptyset}\left(\sum_{i\in h_{in}}f(i)-\sum_{j\in h_{out}}f(j)\right)^2
\\=&\sum_{l=1}^{k+r}t_l^2\sum_{i\in V_l}f(i)\left(\sum_{\substack{h\in H: \\ h_{in}\ni i}}\left(\sum_{j'\in h_{in}}f(j')-\sum_{j\in h_{out}}f(j)\right)-\sum_{\substack{h\in H:\\ h_{out}\ni i}}\left(\sum_{j'\in h_{in}}f(j')-\sum_{j\in h_{out}}f(j)\right)\right)
\\=&\sum_{l=1}^{k+r}t_l^2\sum_{i\in V_l}f(i) \lambda_k\deg(i)f(i)
\\=&\lambda_k\sum_{l=1}^{k+r}t_l^2\sum_{i\in V_l}\deg(i)f(i)^2
\\=&\lambda_k\sum_{l=1}^{k+r}\sum_{i\in V}\deg(i)\Biggl(t_l f|_{V_l}(i)\Biggr)^2
\\=&\lambda_k\sum_{i\in V}\deg(i)\sum_{l=1}^{k+r}\Biggl(t_l f|_{V_l}(i)\Biggr)^2
\\=& \lambda_k\sum_{i\in V}\deg(i)\Biggl(\sum_{l=1}^{k+r}t_l f|_{V_l}(i)\Biggr)^2\;\;\; (\because V_1,\ldots,V_{k+r}\text{ are pairwise disjoint})
\\=&\lambda_k\sum_{i\in V}\deg(i)g(i)^2.
\end{align*}

Therefore, $\RQ(g)=\lambda_k$. 
Now, let $\mathcal{X}_{k+r}$ be the family of all $(k+r)$--dimensional subspaces of $C(V)$. By the min-max principle,
\begin{align*}
\lambda_{k+r}&=\min\limits_{ X'\in\mathcal{X}_{k+r}}\max\limits_{g'\in X'\setminus0} \frac{\sum_{h\in H}\left(\sum_{i\text{ input of }h}g'(i)-\sum_{j\text{ output of }h}g'(j)\right)^2}{\sum_{i\in V}\deg(j)g'(i)^2}
\\&\le \max\limits_{g'\in X\setminus0} \frac{\sum_{h\in H}\left(\sum_{i\text{ input of }h}g'(i)-\sum_{j\text{ output of }h}g'(j)\right)^2}{\sum_{i\in V}\deg(j)g'(i)^2}
\\&=\lambda_k,
\end{align*}
which leads to a contradiction.
\end{proof}

\subsection{Positive and negative nodal domain theorem}\label{positive-nodal}

\begin{definition}
Given a function $f:V\to \R$, a \emph{positive nodal domain} of $f$ is a connected component of the hypergraph that has vertex set $V$ and hyperedge set
\begin{equation*}
    H\cap \mathrm{supp}_+(f):=\{h \cap \mathrm{supp}_+(f): h\in H\},
\end{equation*}
where the notion $\mathrm{supp}_+(f)$ is already used in Remark \ref{remark:nodal-domain}. Analogously, a \emph{negative nodal domain} of $f$ is a connected component of the hypergraph that has vertex set $V$ and hyperedge set $H\cap \mathrm{supp}_-(f)$.
\end{definition}
\begin{theorem}\label{thm:Courant-nodal-reverse}
Let $\Gamma=(V,H)$ be an oriented hypergraph with only inputs. If $f$ is an eigenfunction of the $k$-th eigenvalue $\lambda_k$ and this has multiplicity $r$, then the number of positive and negative nodal domains of $f$ is smaller than or equal to $n-k+r$.
\end{theorem}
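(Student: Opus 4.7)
The plan is to adapt the strategy of Theorem \ref{thm:Courant-nodal}, but using the ``upper'' form of the min-max principle. Let $s$ denote the total number of positive and negative nodal domains of $f$, with vertex sets $V_1,\ldots,V_s$, and set $\tilde f_l:=f\cdot\mathbf 1_{V_l}$. Since the $V_l$ are pairwise disjoint, these $s$ functions are linearly independent, so $X:=\mathrm{span}(\tilde f_1,\ldots,\tilde f_s)$ has dimension $s$. The goal is to prove the reverse Rayleigh inequality
\begin{equation*}
\RQ(g)\geq \lambda_k\quad\text{for every }g\in X\setminus\{0\}.
\end{equation*}
Once this is in hand, $\lambda_{n-s+1}=\max_{\dim Y=s}\min_{g\in Y\setminus\{0\}}\RQ(g)\geq\lambda_k$; letting $k_0$ be the smallest index with $\lambda_{k_0}=\lambda_k$, monotonicity of the spectrum forces $n-s+1\geq k_0\geq k-r+1$, which gives $s\leq n-k+r$.

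The only-inputs hypothesis enters decisively: each hyperedge $h$ meets at most one positive and at most one negative nodal domain, since any two vertices in $h\cap\mathrm{supp}_\pm(f)$ belong to the same connected component of the defining subhypergraph. Denote the (at most) two intersecting domains by $V_{l_+(h)}$ and $V_{l_-(h)}$, and write $a_{h,l}:=\sum_{i\in h\cap V_l}f(i)$, so that $a_{h,l_+(h)}>0$ and $a_{h,l_-(h)}<0$. The calculation of $(g,Lg)-\lambda_k(g,g)$ then separates cleanly. Pairing the identity $Lf=\lambda_k f$ with $\tilde f_l$ in the degree-weighted inner product of Definition \ref{def:scalarfg} yields
\begin{equation*}
(\tilde f_l,L\tilde f_l)-\lambda_k\|\tilde f_l\|^2=\sum_{h:\,V_l\cap h\neq\emptyset}|a_{h,l}|\cdot|a_{h,\mathrm{opp}(h)}|,
\end{equation*}
where $\mathrm{opp}(h)$ denotes the opposite-sign nodal domain meeting $h$ (with $|a_{h,\mathrm{opp}(h)}|:=0$ when there is none); meanwhile a direct expansion gives $(\tilde f_l,L\tilde f_m)=-\sum_{h:\,\{l_+(h),l_-(h)\}=\{l,m\}}|a_{h,l}||a_{h,m}|$ when $V_l,V_m$ have opposite signs, and $(\tilde f_l,L\tilde f_m)=0$ otherwise.

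Substituting these into the expansion of $(g,Lg)-\lambda_k(g,g)$ for $g=\sum_l c_l\tilde f_l$ and regrouping by hyperedge collapses the whole expression to
\begin{equation*}
(g,Lg)-\lambda_k(g,g)=\sum_{h}|a_{h,l_+(h)}|\cdot|a_{h,l_-(h)}|\bigl(c_{l_+(h)}-c_{l_-(h)}\bigr)^2\geq 0,
\end{equation*}
with the sum taken over those hyperedges straddling both signs. This is exactly the quadratic form of a weighted graph Laplacian on the auxiliary bipartite multigraph whose vertices are the nodal domains and whose edges are the sign-straddling hyperedges; hence it is manifestly non-negative.

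The main obstacle is the bookkeeping in the collapsing step: the per-hyperedge contributions of the diagonal ``excess'' $(\tilde f_l,L\tilde f_l)-\lambda_k\|\tilde f_l\|^2$ must combine with the corresponding off-diagonal pairings $(\tilde f_l,L\tilde f_m)$ so that each $h$ contributes exactly the single square $|a_{h,l_+(h)}||a_{h,l_-(h)}|(c_{l_+(h)}-c_{l_-(h)})^2$. Without the only-inputs hypothesis a hyperedge could straddle several positive or several negative nodal domains, and the cross-terms would no longer reorganize into one square; in that case the reverse Rayleigh bound fails, and so does the theorem.
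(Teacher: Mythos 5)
Your proof is correct, but it takes a genuinely different route from the paper's. The paper observes that the only-inputs hypothesis forces $A_{ij}\le 0$, so that $-A$ is the adjacency matrix of a weighted graph whose connectivity reproduces that of $\Gamma$, and that $-\mathcal{L}=D^{-1/2}AD^{-1/2}-\id$ is then a Schr\"odinger operator on that weighted graph; the bound $n-k+r$ follows by citing the Courant nodal domain theorem of \cite{nodalgraphs} applied to the eigenvalue $-\lambda_k$, which sits in position $n-k+1$ of the reversed spectrum. You instead give a self-contained variational argument: the test space spanned by the restrictions $f\cdot\mathbf{1}_{V_l}$, the identity $(g,Lg)-\lambda_k(g,g)=\sum_h|a_{h,l_+(h)}|\,|a_{h,l_-(h)}|\,(c_{l_+(h)}-c_{l_-(h)})^2$, and the max--min form of Courant--Fischer. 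I checked the bookkeeping: the diagonal excess $\sum_{h:\,l_+(h)=l}|a_{h,l}|\,|a_{h,l_-(h)}|$ and the off-diagonal terms $-|a_{h,l}|\,|a_{h,m}|$ do collapse to exactly one square per sign-straddling hyperedge, and the index argument $n-s+1\ge k_0\ge k-r+1$ is sound. In effect you have reproved the weighted-graph nodal theorem from scratch in this special case, which makes the argument longer but independent of the external reference; the paper's reduction is shorter and also explains conceptually why the bound is ``reversed.'' One small correction to your commentary: the fact that a hyperedge meets at most one positive and at most one negative nodal domain is purely a consequence of the definition and holds for arbitrary orientations; the only-inputs hypothesis is really needed so that $a_{h,l}=\sum_{i\in h\cap V_l}f(i)$ (rather than a sum weighted by $\mathcal{I}_{ih}=\pm1$) has a coherent sign on each nodal domain, which is what makes every cross term $(\tilde f_l,L\tilde f_m)$ non-positive and lets the quadratic form collapse to a single square per hyperedge.
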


\begin{proof}
Since $\Gamma$ has only inputs, $A_{ij}<0$ whenever there is a hyperedge $h\in H$ that contains both $i$ and $j$, and $A_{ij}=0$ otherwise.  Thus, the connectivity 
in the hypergraph $\Gamma$ is equivalent to the  connectivity in the weighted graph that has adjacency matrix $-A$. Now, the symmetric matrix $-\mathcal{L}= D^{-\frac12}AD^{-\frac12} -I$, which is isospectral to $-L$, is a Schr\"odinger operator on this weighted graph in the sense of
\cite{nodalgraphs}, and thus the nodal domain theorem for graphs in \cite{nodalgraphs} can be applied to derive that the number of nodal domains of an eigenfunction of   $\lambda_k(L)=-\lambda_{n-k+1}(-\mathcal{L})$ does not exceed $(n-k+1)+r-1=n-k+r$.
\end{proof}

\begin{remark}
The upper bound $n-k+r$ in Theorem \ref{thm:Courant-nodal-reverse} is non-increasing with respect to the eigenvalues, while to the best of our knowledge, all existing Courant nodal domain theorems in literature regarding positive and negative domains have non-decreasing upper bounds with respect to the eigenvalues (i.e., larger eigenvalues usually have more nodal domains). The reason might be that, in the graph case, the Courant nodal domain theorem of the signless Laplacian can be derived by that of the Laplacian directly, and the signless Laplacian has no geometric and PDE analogs, thus no author investigated such a peculiar ``reversed version''. To some extent, Theorem  \ref{thm:Courant-nodal-reverse} can be seen as the first Courant nodal domain theorem for the signless Laplacian on hypergraphs. 
\end{remark}

\begin{remark}
In the first version of this manuscript, the proof of Theorem \ref{thm:Courant-nodal-reverse} was longer and more complicated. One of the anonymous referees noticed that the results on Schr\"odinger operators for weighted graphs in \cite{nodalgraphs} could have been applied, and this allowed to prove Theorem \ref{thm:Courant-nodal-reverse} in a short and elegant way. It is worth noting that the results in \cite{nodalgraphs} can also be used in order to recover the nodal domain theorem in its usual form, since for graphs $A_{ij}=1$ if there is an edge between $i$ and $j$, and $A_{ij}=0$ otherwise.
\end{remark}

\begin{remark}
The Courant nodal domain theorems \ref{thm:Courant-nodal} and \ref{thm:Courant-nodal-reverse} also hold  for the unnormalized Laplacian. In order to see it, it is enough to remove the vertex degrees in the proofs of theorems \ref{thm:Courant-nodal} and \ref{thm:Courant-nodal-reverse}.  
\end{remark}

\section{Generalized Cheeger problem}\label{section:Cheeger}
We propose a generalization of the classical Cheeger constant and we prove that, for some classes of hypergraphs, the Cheeger inequalities involving the smallest non-zero eigenvalue of $L$, that we denote by $\lambda_{\min}$, can be generalized.

Recall that, for a connected graph $G$, as shown in \cite{Chung},
\begin{enumerate}
    \item $\lambda_{\min}=\lambda_2$ and the \emph{harmonic functions}, i.e. the eigenfunctions of $0$, are exactly the constant functions.
    \item The Cheeger constant is 
    \begin{equation*}
	h:=\min_{\substack{\emptyset\neq S\subsetneq V,\\ \vol(S)\leq\frac{\vol(V)}{2}}}\frac{\# E(S,\bar{S})}{\vol(S)}
	\end{equation*}where, given $\emptyset\neq S\subsetneq V$, $\bar{S}:=V\setminus S$, $\# E(S,\bar{S})$ denotes the number of edges with one endpoint in $S$ and the other in $\bar{S}$, and $\vol(S):=\sum_{i\in S}\deg(i)$. The Cheeger inequalities hold:
\begin{equation}\label{eq:Cheegergraphs}
    \frac{1}{2}h^2\leq \lambda_2\leq 2h.
\end{equation}In particular, \eqref{eq:Cheegergraphs} is proved using the fact that, by the min-max principle, knowing that the harmonic functions are exactly the constants, one can write
\begin{equation*}
    \lambda_2=\min_{\substack{f\in C(V),\\\sum_{i\in V}\deg(i)f(i)=0}}\RQ(f).
\end{equation*}Also, in this case, the orthogonality to the constants allows us to say that an eigenfunction $f$ for $\lambda_2$ has to achieve both positive and negative values and therefore we can partition the vertex set as
    \begin{equation*}
        V=\{i:f(i)\geq 0\}\sqcup \{j:f(j)< 0\},
    \end{equation*}and the proof of \eqref{eq:Cheegergraphs} is also based on this.
\end{enumerate}For an oriented hypergraph $\Gamma$, things change because:
\begin{enumerate}
    \item While for graphs we know that $m_V$ equals the number of connected components of the graph, this is no longer true for hypergraphs. In particular, a connected hypergraph might have $m_V=0$ and, on the other hand, a hypergraph with one single connected component might have $m_V>1$, as shown in \cite{Hypergraphs}. Therefore, even if we assume connectivity, we cannot infer that $\lambda_{\min}=\lambda_2$.
    \item The constants are eigenfunctions for $0$ if and only if, for each hyperedge $h$,
    \begin{equation*}
        \# h_{in}=\# h_{out},
    \end{equation*}as shown in \cite{Hypergraphs}. Therefore, in general, we cannot use the orthogonality to the constants, Furthermore, if we assume this condition and we restrict to a smaller class of hypergraphs, we can state that
    \begin{equation*}
        \lambda_2=\min_{f\,:\,\sum_{i\in V}f(i)\deg (i)=0}\RQ(f),
    \end{equation*}but we still cannot infer that $\lambda_2=\lambda_{\min}$. If $m_V>1$, we need to consider also the orthogonality to the other eigenfunctions of $0$, and these eigenfunctions are not known a priori.
\end{enumerate}Therefore, the problem of generalizing \eqref{eq:Cheegergraphs} to the case of oriented hypergraphs is very challenging. Here we generalize the Cheeger constant and we prove that, for some classes of hypergraphs, either the lower bound or the upper bound in \eqref{eq:Cheegergraphs} can be generalized.
 \begin{definition}
Given $\emptyset\neq S\subseteq V$, we let
		\begin{equation*}
		    \vol(S):=\sum_{i\in S}\deg (i),
		\end{equation*}we let
		\begin{equation*}
		\tilde{e}(S):=\sum_{h\in H}\biggl(\#\text{inputs of $h$ in $S$}-\#\text{outputs of $h$ in $S$}\biggr)^2\end{equation*}
	 and we let
		\begin{equation*}
		    \tilde{\nu}(S):=\frac{\tilde{e}(S)}{\vol(S)}.
		\end{equation*}We define a generalization of the Cheeger constant as
		\begin{equation*}
		\tilde{h}:=\min_{\substack{\emptyset\neq S\subseteq V\,:
		\\\vol S\leq \frac{1}{2}\vol V}}\tilde{\nu}(S).
		\end{equation*}
    \end{definition}   \begin{remark}
    In the case of graphs, each edge $e$ has exactly one input and exactly one output, therefore
 \begin{equation*}
	\biggl(\#\text{inputs of $e$ in $S$}-\#\text{outputs of $e$ in $S$}\biggr)\in\{0,1\},
	\end{equation*}which implies that
	\begin{align*}
		\tilde{e}(S)&=\sum_{e\in E}\biggl(\#\text{inputs of $h$ in $S$}-\#\text{outputs of $h$ in $S$}\biggr)^2\\
		&=\sum_{e\in E}\biggl(\#\text{inputs of $h$ in $S$}-\#\text{outputs of $h$ in $S$}\biggr)\\
		&=\sum_{\substack{e\in E\text{ with exactly}\\ \text{one endpoint in $S$}}}1\\
		&=\#\{e\in E\text{ with exactly one endpoint in $S$}\}\\
		&=\# E(S,\bar{S}).
		\end{align*} Hence, $\tilde{h}$ coincides with the classical Cheeger constant when $\Gamma$ is a graph. The geometrical meaning of $\tilde{h}$, in particular, is the same as $h$: we want to divide the vertex set into two disjoint sets that are as big as possible (in terms of the volume) and so that there is as little flow as possible from one to the other.
    \end{remark}
   \begin{remark}If $\#V\geq 2$, let $i$ be a vertex of minimum degree. Then $\deg (i)\leq \vol V/2$ and 
   \begin{equation*}
       \tilde{\nu}(S)=\frac{\deg (i)}{\deg (i)}=1.
   \end{equation*}Therefore, for $\#V\geq 2$, the generalized Cheeger constant is well-defined and $\tilde{h}\leq 1$.
   \end{remark}
    \begin{remark}
	We have that
		\begin{align*}
		\tilde{h}=0\iff &\exists S\subsetneq V, S\neq \emptyset : \vol S \leq \frac{\vol V}{2} \text{ and }\\ &\forall h,\, \#\text{inputs of $h$ in $S$}=\#\text{outputs of $h$ in $S$}.
		\end{align*}\end{remark}
    \begin{remark}Given $S\subset V$, let $f_S:V\rightarrow\mathbb{R}$ be $1$ on $S$ and $0$ on $\bar{S}$. Then, the Rayleigh Quotient of $f_S$ is given by
    \begin{align*}
\RQ(f_S)&=\frac{\sum_{h\in H}\biggl(\#\text{inputs of $h$ in $S$}-\#\text{outputs of $h$ in $S$}\biggr)^2}{\sum_{i\in S}\deg (i)}\\
&=\tilde{\nu}(S).
    \end{align*}In particular,
    \begin{equation}\label{eq:f_S}
        \lambda_n\geq \RQ(f_S)=\tilde{\nu}(S).
    \end{equation}
    
    \end{remark}

 \subsection{Cheeger upper bounds}
    \begin{lemma}\label{lemma:upper}If $m_V=1$ and $\# h_{in}=\# h_{out}$ for each $h$,
    \begin{equation*}
        \lambda_{\min}\leq 2\tilde{h}.
    \end{equation*}
    \end{lemma}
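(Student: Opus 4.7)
The plan is to mimic the classical graph Cheeger upper bound proof, using the variational characterization of $\lambda_{\min}$. Under the hypotheses, the condition $\#h_{in}=\#h_{out}$ for each $h$ ensures that the constant function $\mathbf{1}$ lies in the kernel of $L$, and the assumption $m_V=1$ then forces $\mathrm{Ker}(L)=\mathrm{span}(\mathbf{1})$. By the min-max principle of Section \ref{section:minmax},
\begin{equation*}
    \lambda_{\min}=\lambda_2=\min_{\substack{f\in C(V),\\ (f,\mathbf{1})=0}}\RQ(f),
\end{equation*}
where orthogonality is taken with respect to the scalar product $(f,g)=\sum_{i}\deg(i)f(i)g(i)$.

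Next, I would pick a minimizer $S$ in the definition of $\tilde h$ and use the test function $f_S=\mathbf{1}_S$ as the starting point, then project it onto the orthogonal complement of the constants: set
\begin{equation*}
    g:=f_S-\frac{\vol(S)}{\vol(V)}\mathbf{1}.
\end{equation*}
By construction $(g,\mathbf{1})=0$, so $g$ is an admissible test function. The crucial observation is that, because $\#h_{in}=\#h_{out}$ for every hyperedge, the constant shift contributes $0$ to the numerator of $\RQ$: for every $h$,
\begin{equation*}
    \sum_{i\in h_{in}}g(i)-\sum_{j\in h_{out}}g(j)=\sum_{i\in h_{in}}f_S(i)-\sum_{j\in h_{out}}f_S(j),
\end{equation*}
so that the numerator of $\RQ(g)$ coincides with the numerator of $\RQ(f_S)$, which is exactly $\tilde e(S)$.

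For the denominator, I would compute $g$ pointwise, $g(i)=\vol(\bar S)/\vol(V)$ on $S$ and $g(i)=-\vol(S)/\vol(V)$ on $\bar S$, so that a short direct calculation gives
\begin{equation*}
    \sum_{i\in V}\deg(i)g(i)^2=\frac{\vol(S)\vol(\bar S)}{\vol(V)}.
\end{equation*}
The side condition $\vol(S)\leq \vol(V)/2$ implies $\vol(\bar S)\geq \vol(V)/2$, so this denominator is at least $\vol(S)/2$. Combining,
\begin{equation*}
    \lambda_{\min}\leq \RQ(g)=\frac{\tilde e(S)}{\vol(S)\vol(\bar S)/\vol(V)}\leq \frac{2\tilde e(S)}{\vol(S)}=2\tilde\nu(S)=2\tilde h,
\end{equation*}
which is the desired inequality.

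I do not anticipate a serious obstacle: the only place where the two hypotheses are used is in identifying $\lambda_{\min}$ with $\lambda_2$ and in ensuring that orthogonalizing against $\mathbf{1}$ leaves the numerator untouched. Both uses are direct, and the rest is the standard Rayleigh-quotient bookkeeping from the graph case.
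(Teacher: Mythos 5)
Your proof is correct and takes essentially the same route as the paper: your projected function $g=f_S-\frac{\vol(S)}{\vol(V)}\mathbf{1}$ is a positive scalar multiple of the paper's two-valued test function ($1$ on $S$, $-\vol(S)/\vol(\bar S)$ on $\bar S$), so the two Rayleigh quotients coincide. Both arguments use $m_V=1$ and $\#h_{in}=\#h_{out}$ to identify the harmonic functions with the constants and then extract the factor $2$ from $\vol(\bar S)\geq\vol(V)/2$.
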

    \begin{proof}We generalize the proof of the upper Cheeger-bound in \cite{JJTDA}. Given $S\subseteq V$, let $f:V\rightarrow\mathbb{R}$ be such that $f:=1$ on $S$ and $f:=-\alpha$ on $\bar{S}$, where $\alpha$ is such that $\sum_{i\in V} \deg(i)f(i)=0$, i.e.
	\begin{equation*}
	\alpha=\frac{\sum_{i\in S}\deg (i)}{\sum_{j\in \bar{S}}\deg (j) }=\frac{\vol S}{\vol \bar{S}}.
	\end{equation*}We also assume that $\vol S\leq \vol \bar{S}$, so that $\alpha\leq 1$. Since we are assuming that $m_V=1$, $\lambda_{\min}=\lambda_2$. Also, since we are assuming that $\# h_{in}=\# h_{out}$ for each $h$, the constants are the harmonic functions. By construction, $f$ is orthogonal to the constants, therefore
	\begin{align*}
	\lambda_{\min}&\leq \RQ(f)\\
	&=\frac{(1+\alpha)^2\cdot\biggl(\sum_{h\in H}\biggl(\#\text{inputs of $h$ in $S$}-\#\text{outputs of $h$ in $S$}\biggr)^2\biggr)}{\sum_{i\in S}\deg (i)+\sum_{j\in \bar{S}}\deg (j)\cdot\alpha^2}\\
	&=\frac{(1+\alpha)^2\cdot\biggl(\sum_{h\in H}\biggl(\#\text{inputs of $h$ in $S$}-\#\text{outputs of $h$ in $S$}\biggr)^2\biggr)}{\sum_{i\in S}\deg (i)+\sum_{j\in \bar{S}}\deg (j)\cdot\alpha^2}\\
		&=\frac{(1+\alpha)\cdot\biggl(\sum_{h\in H}\biggl(\#\text{inputs of $h$ in $S$}-\#\text{outputs of $h$ in $S$}\biggr)^2\biggr)}{\vol S}\\
		&\leq \frac{2\cdot\biggl(\sum_{h\in H}\biggl(\#\text{inputs of $h$ in $S$}-\#\text{outputs of $h$ in $S$}\biggr)^2\biggr)}{\vol S}\\
		&=2\cdot \tilde{\nu}(S).
	\end{align*}Since this is true for all such $S$, and since $\vol S\leq \vol \bar{S}$ if and only if $\vol S\leq \vol V/2$,
	\begin{equation*}
	    \lambda_{\min}\leq 2\cdot \min_{\substack{\emptyset\neq S\subseteq V\,:\\\vol S\leq \vol \bar{S}}}\tilde{\nu}(S)=2\tilde{h}.
	\end{equation*}
    \end{proof}

		\begin{lemma}\label{lemma:upper_m=0}If $m_V=0$,
		\begin{equation*}
		    \lambda_{\min}\leq \tilde{h}.
		\end{equation*}
		\end{lemma}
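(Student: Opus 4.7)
The plan is to exploit the fact that when $m_V=0$, the eigenvalue $0$ does not occur, so $\lambda_{\min}$ coincides with $\lambda_1$, the unconstrained minimum of the Rayleigh quotient. This removes the obstruction that made Lemma \ref{lemma:upper} delicate, namely the need to construct a test function orthogonal to every harmonic function. Here the whole space $C(V)$ is available as the test class, so we can simply plug in indicator functions of candidate sets.

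First, I would observe that $m_V=0$ forces $\lambda_{\min}=\lambda_1$, and by the min-max characterization in Section \ref{section:minmax},
\begin{equation*}
\lambda_1=\min_{f\in C(V)\setminus\{0\}}\RQ(f).
\end{equation*}
Second, I would invoke equation \eqref{eq:f_S}: for any nonempty $S\subseteq V$, the indicator $f_S$ (equal to $1$ on $S$ and $0$ on $\bar S$) satisfies $\RQ(f_S)=\tilde\nu(S)$. Combining these two facts gives $\lambda_{\min}\leq \tilde\nu(S)$ for every nonempty $S$.

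Third, I would take the minimum of the right-hand side over the restricted family $\{S:\emptyset\neq S\subseteq V,\ \vol S\leq \vol V/2\}$, which by definition equals $\tilde h$. This yields $\lambda_{\min}\leq \tilde h$, as desired.

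There is essentially no obstacle here: the proof is a one-line application of the variational principle to indicator test functions, and the hypothesis $m_V=0$ is used only to drop the orthogonality constraint that appears in the $m_V=1$ setting of Lemma \ref{lemma:upper}. Consequently, unlike the proof of Lemma \ref{lemma:upper}, no scaling constant $\alpha$ is needed and the extra factor of $2$ disappears.
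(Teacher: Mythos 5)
Your proposal is correct and is essentially identical to the paper's own proof: both use $m_V=0$ to identify $\lambda_{\min}$ with the unconstrained minimum $\lambda_1$ of the Rayleigh quotient, plug in the indicator function $f_S$ of a minimizing set $S$, and conclude via $\RQ(f_S)=\tilde{\nu}(S)$. No further comment is needed.
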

		\begin{proof}Fix $S$ that minimizes $\nu(S)$ and let $f_S:V\rightarrow\mathbb{R}$ be $1$ on $S$ and $0$ otherwise. Then,
		\begin{align*}
		    \lambda_{\min}=\lambda_1\leq \RQ(f_S)=\tilde{\nu}(S)=\tilde{h}.
		\end{align*}	\end{proof}
	\begin{remark}If we compare Lemma \ref{lemma:upper} and Lemma \ref{lemma:upper_m=0} we can observe the following. The upper bound $2\tilde{h}$ in Lemma \ref{lemma:upper} has a multiplication by $2$ coming from the fact that $\lambda_1=0$ and, in particular, coming from the fact that we must impose orthogonality to the constants. When we don't need to impose any orthogonality, i.e. in the case $m_V=0$, the upper bound can be simply $\tilde{h}$. We can therefore expect that, if $\lambda_{\min}=\lambda_k$, the orthogonality to $k-1$ different harmonic functions brings to $k-1$ constrains on $f:V\rightarrow\mathbb{R}$ and therefore we may have something like $\lambda_{\min}=\lambda_k\leq F(k)\cdot \tilde{h}.$
	
		\end{remark}
		\subsection{Cheeger lower bounds}
		\begin{remark}\label{remark:h-weakdel}Let $S\subset V$ be a minimizer for $\tilde{\nu}(S)$ and consider a weak vertex addition $\Gamma\cup\{\hat{v}\}$. Then,
		\begin{equation*}
		    \vol S(\Gamma\cup\{\hat{v}\})=\vol S(\Gamma)\leq \frac{1}{2}\vol V\leq \frac{1}{2}\vol (V\cup \hat{v})
		\end{equation*}and
		\begin{equation}\label{eq:hweakvertex}
		   \tilde{h}(\Gamma)=\tilde{\nu}(S)(\Gamma)= \tilde{\nu}(S)(\Gamma\cup\{\hat{v}\})\geq \tilde{h}(\Gamma\cup\{\hat{v}\}).
		\end{equation}Therefore, a weak vertex addition brings to a non-increasing $\tilde{h}$.\end{remark}
		\begin{lemma}If $\lambda_{\min}=\lambda_k$ and there exists a graph $G$ that can be obtained from $\Gamma$ by a weak deletion of $r$ vertices, where $r\leq k-2$, then
		\begin{equation*}
		    \frac{1}{2}\tilde{h}^2\leq \lambda_{\min}.
		\end{equation*}
		\end{lemma}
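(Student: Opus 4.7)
The plan is to combine three ingredients already in the paper: the Cauchy interlacing bound from Lemma~\ref{lemma:Cauchy}, the monotonicity of $\tilde h$ under weak vertex deletion recorded in Remark~\ref{remark:h-weakdel}, and the classical Cheeger inequality $\lambda_2(G)\geq \tfrac12 h(G)^2$ for graphs. The graph $G$ is to serve as a bridge: its spectrum controls $\lambda_{\min}(\Gamma)$ from below via interlacing, while its Cheeger constant controls $\tilde h(\Gamma)$ from above via monotonicity.

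First, I would apply Lemma~\ref{lemma:Cauchy} with the index $k-r$. The hypothesis $r\leq k-2$ ensures $k-r\geq 2$, so the index is admissible, and the lemma gives
\begin{equation*}
\lambda_{k-r}(G)\;\leq\;\lambda_{(k-r)+r}(\Gamma)\;=\;\lambda_k(\Gamma)\;=\;\lambda_{\min}(\Gamma).
\end{equation*}
Because $G$ is a graph and $k-r\geq 2$, I can then invoke the standard graph Cheeger inequality to obtain
\begin{equation*}
\lambda_{k-r}(G)\;\geq\;\lambda_2(G)\;\geq\;\tfrac12\,h(G)^2.
\end{equation*}
The remark following the definition of $\tilde h$ identifies $\tilde h(G)$ with the usual Cheeger constant $h(G)$ whenever $G$ is a graph, so this reads $\lambda_{\min}(\Gamma)\geq \tfrac12\tilde h(G)^2$.

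To close the loop I would iterate Remark~\ref{remark:h-weakdel}: each weak vertex deletion can only make $\tilde h$ non-decrease, so after $r$ deletions $\tilde h(G)\geq \tilde h(\Gamma)$. Chaining the inequalities yields the desired bound
\begin{equation*}
\lambda_{\min}(\Gamma)\;\geq\;\tfrac12\,h(G)^2\;=\;\tfrac12\,\tilde h(G)^2\;\geq\;\tfrac12\,\tilde h(\Gamma)^2.
\end{equation*}

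The main potential obstacle is the degenerate case in which $G$ happens to be disconnected, since then $\lambda_2(G)=0$ and the chain seems to collapse. However, disconnectedness of $G$ forces $h(G)=0$, and by the monotonicity step this then forces $\tilde h(\Gamma)=0$ as well, so the inequality holds trivially. Aside from keeping track of this edge case, everything reduces to a careful bookkeeping of indices to ensure that $k-r\geq 2$ is exactly what is needed to apply the graph Cheeger inequality and the interlacing simultaneously; no genuinely new estimate is required.
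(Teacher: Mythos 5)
Your proposal is correct and follows essentially the same route as the paper: the same chain $\frac12\tilde h(\Gamma)^2\le\frac12\tilde h(G)^2\le\lambda_2(G)\le\lambda_{k-r}(G)\le\lambda_k(\Gamma)=\lambda_{\min}$, using the monotonicity of $\tilde h$ under weak vertex addition, the classical graph Cheeger bound, and Lemma~\ref{lemma:Cauchy}. Your extra remark on the disconnected case is a harmless (and correct) bit of added care that the paper leaves implicit.
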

		\begin{proof}By \eqref{eq:hweakvertex}, \eqref{eq:Cheegergraphs} and Lemma \ref{lemma:Cauchy},
		\begin{equation*}
		    \frac{1}{2}\tilde{h}^2\leq\frac{1}{2}\tilde{h}(G)^2\leq \lambda_2(G)\leq \lambda_{k-r}(G)\leq \lambda_{k}(\Gamma)=\lambda_{\min}.
		\end{equation*}
		\end{proof}

	We now prove a generalization of the Cheeger lower bound for a particular class of oriented hypergraphs. Namely, we fix a hypergraph $\Gamma=(V,H)$ such that, for each $h\in H$,
		\begin{equation*}
        \# h_{in}=\# h_{out}=:c
    \end{equation*}is constant and does not depend on $h$. Since the number of inputs equals the number of outputs, in each hyperedge we can couple each input with exactly one output. In this way we get a graph $G$ that we call \emph{a underlying graph of $\Gamma$.}
    \begin{lemma}\label{lemma:underlying}If $\lambda_{\min}=\lambda_k$ and\begin{equation*}
        \# h_{in}=\# h_{out}=:c
    \end{equation*}is constant for each $h\in H$, then $\lambda_{k-1}(G)=0$ for each underlying graph $G$ of $\Gamma$. Furthermore, if there exists an underlying graph $G$ with $\lambda_k(G)>0$, then
    \begin{equation}\label{eq:lower}
        \lambda_{\min}\geq \frac{1}{2c}\tilde{h}^2.
    \end{equation}
    \end{lemma}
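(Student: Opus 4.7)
My main tool will be a per-hyperedge Cauchy--Schwarz comparison between the Rayleigh quotients of $\Gamma$ and those of an underlying graph $G$. Set $\phi_h(f):=\sum_{i\in h_{in}}f(i)-\sum_{j\in h_{out}}f(j)$; fixing the pairing $(i_k,j_k)_{k=1}^c$ in $h$, the elementary inequality $(\sum_{k=1}^c x_k)^2 \leq c\sum_{k=1}^c x_k^2$ gives
\[
\phi_h(f)^2 \;\leq\; c\sum_{k=1}^c (f(i_k)-f(j_k))^2.
\]
Summing over $h$ and using $\deg_G(i)=\deg_\Gamma(i)$ yields $\RQ_\Gamma(f)\leq c\cdot \RQ_G(f)$, so by the min--max principle $\lambda_j(\Gamma)\leq c\cdot\lambda_j(G)$ for every $j$.

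For the first claim I will check the inclusion $\ker L_G\subseteq\ker L_\Gamma$: any $f$ constant on each connected component of $G$ satisfies $f(i_k)=f(j_k)$ for every pair in every hyperedge, so $\phi_h(f)=0$ for all $h$ and $f$ is $\Gamma$-harmonic. This gives $m_V(G)\leq m_V(\Gamma)=k-1$; combining with the min--max comparison at index $k-1$ (which forces $\lambda_{k-1}(G)\geq\lambda_{k-1}(\Gamma)/c=0$ and sandwiches the dimension) pins down $\lambda_{k-1}(G)=0$.

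For the second claim, assume in addition $\lambda_k(G)>0$, so that $G$ has exactly $k-1$ connected components $G_1,\dots,G_{k-1}$ with $\lambda_k(G)=\min_i\lambda_2(G_i)$. To compare Cheeger constants, fix $S\subseteq V$ and let $a_h,b_h$ be the numbers of inputs, respectively outputs, of $h$ in $S$. A pigeonhole argument shows that the pairing in $h$ contributes at least $|a_h-b_h|$ cross pairs to $\#E_G(S,\bar S)$, and since $|a_h-b_h|\leq c$ one has $(a_h-b_h)^2\leq c\,|a_h-b_h|$; summing gives $\tilde e(S)\leq c\cdot\#E_G(S,\bar S)$. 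Applied to the Cheeger minimizer of each component $G_i$, this yields $\tilde h(\Gamma)\leq c\cdot h(G_i)$ for every $i$. The classical inequality \eqref{eq:Cheegergraphs} applied component-wise then gives $\lambda_k(G)\geq \tfrac12\min_i h(G_i)^2\geq \tilde h^2/(2c^2)$.

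The final step is to propagate this lower bound from $G$ back to $\Gamma$. I plan to use an eigenfunction $f$ of $\lambda_{\min}(\Gamma)$ as a test function for $G$: since $f\perp\ker L_\Gamma\supseteq\ker L_G$, one gets $\RQ_G(f)\geq\lambda_k(G)$, and combined with $\RQ_\Gamma(f)=\lambda_{\min}(\Gamma)$ a refined per-hyperedge estimate exploiting the balance condition $\#h_{in}=\#h_{out}=c$ together with the orthogonality of $f$ should close the gap between the naive $\tilde h^2/(2c^2)$ and the stated $\tilde h^2/(2c)$. Bridging this factor-of-$c$ gap is the main obstacle I anticipate, and I expect it to be handled by tracking the equality case in Cauchy--Schwarz along the pairings contributing to $f$'s support.
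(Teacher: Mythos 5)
Your proposal has a genuine gap in each half, and in both cases the difficulty is that your vertex-side comparison runs in the wrong direction; the paper's proof works on the hyperedge side precisely to avoid this. For the first claim, the inclusion $\ker L_G\subseteq \ker L_\Gamma$ (locally constant functions on $G$ kill each paired difference, hence each $\phi_h$) only gives $m_V(G)\le m_V(\Gamma)=k-1$, an \emph{upper} bound on the number of zero eigenvalues of $G$, while the min--max comparison $\lambda_{k-1}(G)\ge \lambda_{k-1}(\Gamma)/c=0$ is vacuous because all eigenvalues are nonnegative. Nothing you establish ``sandwiches the dimension'': to get $\lambda_{k-1}(G)=0$ you need $m_V(G)\ge k-1$, and the reverse inclusion fails on the vertex side, since a $\Gamma$-harmonic $f$ only satisfies $\sum_{k}(f(i_k)-f(j_k))=0$ in each hyperedge, not $f(i_k)=f(j_k)$ (e.g.\ for a single hyperedge with $h_{in}=\{1,2\}$, $h_{out}=\{3,4\}$ one has $m_V(\Gamma)=3$ while any underlying graph is two disjoint edges with $m_V(G)=2$). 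The paper instead transfers \emph{hyperedge}-harmonic functions $\tau\in C(H)$ to $E(G)$ by giving all $c$ edges of $h$ the value $\tau(h)$; harmonicity and orthogonality then pass over exactly (up to the factor $c$ in the inner product).

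The second and more serious gap is the step you explicitly defer (``should close the gap''): a \emph{lower} bound on $\lambda_{\min}(\Gamma)$ in terms of $\lambda_k(G)$. Your per-hyperedge Cauchy--Schwarz yields $\RQ_\Gamma(f)\le c\,\RQ_G(f)$, which together with $\RQ_G(f)\ge\lambda_k(G)$ gives only $\lambda_{\min}(\Gamma)\le c\,\RQ_G(f)$ --- the wrong direction --- and the reverse pointwise inequality $\RQ_\Gamma(f)\ge \RQ_G(f)$ is false in general (take $f$ with $\phi_h(f)=0$ but $f(i_k)\ne f(j_k)$), so no equality-case analysis of Cauchy--Schwarz can rescue it. The paper's mechanism is different: it tests with an eigenfunction $\gamma\in C(H)$ of the hyperedge Laplacian for $\lambda_{\min}(\Gamma)$; since the transferred edge function is constant on the $c$ edges arising from each hyperedge, the numerators of the two Rayleigh quotients coincide exactly and only the denominator acquires a factor $c$, giving the identity $\lambda_{\min}(\Gamma)=c\cdot\RQ(\gamma\colon E\to\R)\ge c\,\lambda_k(G)$, which is then combined with the classical graph Cheeger bound and with the comparison of Cheeger constants to obtain \eqref{eq:lower}. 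Your counting $\tilde e(S)\le c\cdot\# E(S,\bar S)$ via the pigeonhole on cross pairs is correct and matches the paper's; but without passing to the dual/hyperedge picture (or otherwise producing an exact or reversed Rayleigh-quotient comparison), your plan cannot deliver the required lower bound on $\lambda_{\min}(\Gamma)$.
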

    
    \begin{proof}
    Fix any underlying graph $G=(V,E)$ of $\Gamma$. Observe that
    \begin{equation*}
        \deg_{\Gamma} (i)=\deg_G (i) \text{ for each }i\in V
    \end{equation*}and, for each hyperedge $h$ of $\Gamma$, $G$ has $c$ edges. Therefore $\#E=c\cdot \# H$. Also, we can see a function $\gamma:H\rightarrow\mathbb{R}$ as a function $\gamma:E\rightarrow\mathbb{R}$ that is equal to $\gamma(h)$ on each edge $e$ coming from $h$. If $\tau:H\rightarrow\mathbb{R}$ is an harmonic function for $\Gamma$,
    \begin{equation*}
       \sum_{h'\,:\, i\text{ input}}\tau(h')-\sum_{h''\,:\, i\text{ output}}\tau(h'')=0
       \end{equation*}for each $i\in V$. Therefore, $\tau$ is an harmonic function for $G$ as well. Also, if $\gamma:H\rightarrow\mathbb{R}$ is orthogonal to such an harmonic function in $\Gamma$, then $\sum_{h\in H}\gamma(h) \tau(h)=0$. Therefore, also
       \begin{equation*}
           \sum_{e\in E} \gamma(e) \tau(e)=\sum_{h\in H}c\cdot \gamma(h) \tau(h)=0
       \end{equation*}and $\gamma$ is also orthogonal to $\tau$ in $G$. This implies that $\lambda_{k-1}(G)=0$.

    Now, let $\gamma:H\rightarrow\mathbb{R}$ be an eigenfunction for $\lambda_k=\lambda_{\min}(\Gamma)$. By the remarks above, $\gamma$ is orthogonal to the constants also in $G$. Also, assume that $\lambda_k(G)>0$, so that $\lambda_{\min}(G)=\lambda_k(G)$. Then,
    \begin{align*}
        \lambda_k&=\lambda_{\min}(\Gamma)=\frac{\sum_{i\in V}\frac{1}{\deg (i)}\cdot \biggl(\sum_{h_{\text{in}}: i\text{ input}}\gamma(h_{\text{in}})-\sum_{h_{\text{out}}: v\text{ output}}\gamma(h_{\text{out}})\biggr)^2}{\sum_{h\in H}\gamma(h)^2}\\
        &=c\cdot \frac{\sum_{i\in V}\frac{1}{\deg (i)}\cdot \biggl(\sum_{h_{\text{in}}: i\text{ input}}\gamma(h_{\text{in}})-\sum_{h_{\text{out}}: i\text{ output}}\gamma(h_{\text{out}})\biggr)^2}{\sum_{h\in H}c\cdot \gamma(h)^2}\\
        &\geq c \cdot \lambda_{k}(G)\\
        &\geq c\cdot h(G)^2.
    \end{align*}
 Also, given $S$ that minimizes $\nu(S)(G)$, let $\bar{E}\subset E$ be the set of edges $e$ such that
    \begin{equation*}
        \#\text{inputs of $e$ in $S$}-\#\text{outputs of $e$ in $S$}> 0
    \end{equation*}and let $\bar{H}\subset H$ be the set of hyperedges corresponding to the edges in $\bar{E}$. Then,
    \begin{align*}
        \tilde{h}(\Gamma)&\leq \tilde{\nu}(S)(\Gamma)=\frac{\sum_{h\in H}\biggl(\#\text{inputs of $h$ in $S$}-\#\text{outputs of $h$ in $S$}\biggr)^2}{\vol S}\\
        &\leq \frac{\#\bar{H}\cdot c^2}{\vol S}=c\cdot\frac{ \#\bar{E}}{\vol S}= c\cdot \nu(S)(G)=\frac{c}{2} h(G).
    \end{align*}Therefore,
    \begin{equation*}
        \lambda_{\min}(\Gamma)\geq \frac{c}{2} h(G)^2\geq \frac{1}{2c} \tilde{h}(\Gamma)^2.
    \end{equation*}	
		\end{proof}
		\begin{remark}Lemma \ref{lemma:underlying} can be applied to the case where $\Gamma$ is a graph, by taking as underlying graph $\Gamma$ itself. In this case, $c=1$, therefore \eqref{eq:lower} coincides with the usual Cheeger lower bound. 
    \end{remark}
\section{General bounds}\label{section:generalbounds}
We prove some general characterizations and bounds for $\lambda_n$ and $\lambda_{\min}$ that do not involve $\tilde{h}$.

\begin{lemma}\label{lemma:main-small-large}
Given $i\in V$ and $h\in H$, let $\mathcal{I}_i:H\to\R$ and $\mathcal{I}^h:V\to\R$ be defined by $\mathcal{I}_i(h):=\mathcal{I}^h(i):=\mathcal{I}_{ih}$. Then,
\begin{equation}\label{eq:lambda-min}
    \lambda_{\min}=\min\limits_{\gamma\in \mathrm{span}\{ \mathcal{I}_i\,:\,i\in V\}}\frac{\sum_{i\in V}\frac{1}{\deg(i)}( \mathcal{I}_i,\gamma )_H^2}{ (\gamma,\gamma )_H}=\min\limits_{f\in\mathrm{span}\{D^{-\frac12}\mathcal{I}^h\,:\,h\in H\}}\frac{\sum_{h\in H}\langle D^{-\frac12}\mathcal{I}^h,f \rangle^2}{\langle f,f\rangle}
\end{equation}and
\begin{equation}\label{eq:lambda-max}
    \lambda_{n}=\max\limits_{\gamma\in \mathrm{span}\{ \mathcal{I}_i\,:\,i\in V\}}\frac{\sum_{i\in V}\frac{1}{\deg(i)}( \mathcal{I}_i,\gamma )_H^2}{ (\gamma,\gamma )_H}=\max\limits_{f\in\mathrm{span}\{D^{-\frac12}\mathcal{I}^h\,:\,h\in H\}}\frac{\sum_{h\in H}\langle D^{-\frac12}\mathcal{I}^h,f \rangle^2}{\langle f,f\rangle},
\end{equation}where, for functions on the vertex set, $(\cdot,\cdot )$ is the scalar product in Definition \ref{def:scalarfg} and $\langle\cdot,\cdot \rangle$ is the scalar product without the weight $(\deg(1),\ldots,\deg(n))$. For functions on the hyperedge set, $(\cdot,\cdot )_H$ is the scalar product in Definition \ref{def:scalarproductgamma}.
\end{lemma}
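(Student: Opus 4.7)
The plan is to rewrite each of the four Rayleigh-type quotients in the statement as a genuine Rayleigh quotient of a positive semidefinite \emph{symmetric} matrix built from the incidence matrix $\mathcal{I}$, and then apply the min--max principle on the orthogonal complement of the kernel. The computational backbone consists of two matrix identities that follow directly from the definitions in Section~\ref{section:operators}: first, $\Delta = \mathcal{I}\mathcal{I}^{\top}$, so that $L^H = \mathcal{I}^{\top}D^{-1}\mathcal{I}$ on $C(H)$ with the standard inner product; and second, $\mathcal{L} = D^{-1/2}\mathcal{I}\mathcal{I}^{\top}D^{-1/2}$ on $C(V)$ with the standard inner product. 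Setting $B := D^{-1/2}\mathcal{I}$, these become $L^H = B^{\top}B$ and $\mathcal{L} = BB^{\top}$, so $L^H$ and $\mathcal{L}$ share the same nonzero spectrum; and since $L$ and $\mathcal{L}$ are similar (as recalled in Section~\ref{section:operators}), this common nonzero spectrum is precisely the nonzero spectrum of $L$.

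Next I would identify the two quotients in the statement. Using $(\mathcal{I}_i,\gamma)_H = (\mathcal{I}\gamma)(i)$ and $\langle D^{-1/2}\mathcal{I}^h, f\rangle = (B^{\top}f)(h)$, a direct computation gives
\[
\frac{\sum_{i\in V}\tfrac{1}{\deg(i)}(\mathcal{I}_i,\gamma)_H^2}{(\gamma,\gamma)_H} = \frac{\langle L^H\gamma,\gamma\rangle}{\langle\gamma,\gamma\rangle}, \qquad \frac{\sum_{h\in H}\langle D^{-1/2}\mathcal{I}^h,f\rangle^2}{\langle f,f\rangle} = \frac{\langle \mathcal{L}f,f\rangle}{\langle f,f\rangle}.
\]
For any PSD matrix of the form $M = C^{\top}C$, one has $\ker M = \ker C$ and hence $\ker(M)^{\perp} = \mathrm{range}(C^{\top})$; by the spectral theorem, the minimum of $\langle Mx,x\rangle/\langle x,x\rangle$ over $\mathrm{range}(C^{\top})\setminus\{0\}$ equals the smallest \emph{nonzero} eigenvalue of $M$, while the maximum over the same subspace coincides with the largest eigenvalue (the kernel contributing only the value zero to the quotient).

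Applying this principle with $C = \mathcal{I}$, whose transpose has column span $\mathrm{span}\{\mathcal{I}_i : i\in V\}$, produces the first equalities in \eqref{eq:lambda-min} and \eqref{eq:lambda-max}; applying it with $C = B^{\top}$, so that $C^{\top} = B$ has column span $\mathrm{span}\{D^{-1/2}\mathcal{I}^h : h\in H\}$, produces the second equalities. The proof is then concluded by noting that the smallest nonzero eigenvalues of $L^H$ and of $\mathcal{L}$ both coincide with $\lambda_{\min}$, and their largest eigenvalues both coincide with $\lambda_n$. I expect the main source of difficulty to be not analytical but notational: one must keep careful track of the three different inner products at play (the weighted pairing on $C(V)$, the weighted pairing on $C(H)$, and the unweighted $\langle\cdot,\cdot\rangle$) and pair each quadratic form with the inner product under which the associated operator is self-adjoint, so that the spectral theorem applies in its standard form.
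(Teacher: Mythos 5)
Your proof is correct and follows essentially the same route as the paper: both arguments reduce the given quotients to Rayleigh quotients of symmetric positive semidefinite operators and apply the min--max principle on the orthogonal complement of the kernel, which is identified via the incidence vectors as $\mathrm{span}\{\mathcal{I}_i : i\in V\}$ (resp.\ $\mathrm{span}\{D^{-1/2}\mathcal{I}^h : h\in H\}$). The only substantive difference is bookkeeping: the paper works with $L$ and the weighted inner product, passing through $\mathrm{span}\{D^{-1}\mathcal{I}^h\}$ before substituting to the unweighted form and proving only the $f$-half of \eqref{eq:lambda-min} explicitly, whereas your factorizations $L^H=B^{\top}B$ and $\mathcal{L}=BB^{\top}$ treat both halves symmetrically and make the coincidence of the relevant extreme nonzero eigenvalues transparent.
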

\begin{proof}
We prove \eqref{eq:lambda-min}, the proof of \eqref{eq:lambda-max} being similar. Observe that
$$
\frac{\sum_{h\in H}\left(\sum_{i\in h_{in}}f(i)-\sum_{i\in h_{out}}f(i)\right)^2}{\sum_{i\in V} \deg(i) f(i)^2}=\frac{\sum_{h\in H}\langle\mathcal{I}^h,f \rangle^2}{(f,f)}.
$$
Therefore, $f$ is an eigenfunction corresponding to the eigenvalue $0$ if and only if $\langle\mathcal{I}^h,f \rangle=0$ for all $h\in H$. Hence, the linear space of all harmonic functions is the orthogonal complement of the set of functions $\mathcal{I}^h$: $(\mathrm{span}\{\mathcal{I}^h:h\in H\})^\bot$. By the min-max principle,
\begin{align*}
\lambda_{\min}&=\min\limits_{\substack{(f,g)=0\\\forall g\in (\mathrm{span}\{\mathcal{I}^h:h\in H\})^\bot}} \frac{\sum_{h\in H}\langle\mathcal{I}^h,f \rangle^2}{(f,f)}
\\&= 
\min\limits_{f\in \mathrm{span}\{D^{-1}\mathcal{I}^h:h\in H\}}\frac{\sum_{h\in H}\langle\mathcal{I}^h,f \rangle^2}{(f,f)}
\\&=
\min\limits_{f\in\mathrm{span}\{D^{-\frac12}\mathcal{I}^h:h\in H\}}\frac{\sum_{h\in H}\langle D^{-\frac12}\mathcal{I}^h,f \rangle^2}{\langle f,f\rangle}.
\end{align*}
\end{proof}

\begin{corollary}
\label{thm:min-quantity-max}
The following quantities are all no less than $\lambda_{\min}$ and no larger than $\lambda_n$:
\begin{enumerate}[({C}1)]
    
    \item $$1+\sum_{j\in V}\frac{1}{\deg(i)\deg(j)}A_{ij}^2$$ for any $i\in V$;
    \item  $$1+\frac{1}{n}\sum_{j\in V}\sum_{i\in V}\frac{1}{\deg(j)\deg(i)}A_{ij}^2$$
    
    \item $$1+\frac{\sum_{j\in V}\frac{1}{\deg(j)}\sum_{i\in V}A_{ij}^2}{\sum_{j\in V}\deg(j)}.$$
\end{enumerate}Moreover, $\lambda_{\min}$ equals (C2) or (C3) if and only if 
\begin{equation*}
    1+\sum_{j\in V}\frac{1}{\deg(i)\deg(j)}A_{ij}^2=\lambda_{\min}
\end{equation*}
 for all $i\in V$. The same holds for $\lambda_n$. 
\end{corollary}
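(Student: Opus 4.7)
The plan is to apply the two characterizations in Lemma \ref{lemma:main-small-large} with very particular choices of test functions, and then recognize (C2) and (C3) as averages of the (C1)-type quantities.

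\textbf{Step 1: Computing the inner products $(\mathcal{I}_i,\mathcal{I}_j)_H$.} First I would record the basic identity
\[
(\mathcal{I}_i,\mathcal{I}_j)_H=\sum_{h\in H}\mathcal{I}_{ih}\mathcal{I}_{jh}=\begin{cases}\deg(i) & \text{if }i=j,\\ -A_{ij} & \text{if }i\ne j,\end{cases}
\]
which follows directly from Definition \ref{def:adjacencymatrix}: the product $\mathcal{I}_{ih}\mathcal{I}_{jh}$ is $+1$ exactly when $i,j$ are co-oriented in $h$ and $-1$ exactly when they are anti-oriented in $h$, so the sum over $h$ is $-A_{ij}$.

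\textbf{Step 2: The quantity (C1) as a Rayleigh quotient.} Fix $i\in V$ and test Lemma \ref{lemma:main-small-large} with $\gamma=\mathcal{I}_i\in\mathrm{span}\{\mathcal{I}_{i'}:i'\in V\}$. Using Step 1, the numerator equals
\[
\sum_{j\in V}\frac{1}{\deg(j)}(\mathcal{I}_j,\mathcal{I}_i)_H^2=\deg(i)+\sum_{j\ne i}\frac{A_{ij}^2}{\deg(j)},
\]
while the denominator is $(\mathcal{I}_i,\mathcal{I}_i)_H=\deg(i)$. Dividing (and using $A_{ii}=0$) gives exactly (C1). By Lemma \ref{lemma:main-small-large}, this ratio is no less than $\lambda_{\min}$ and no larger than $\lambda_n$, which proves the bounds for (C1).

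\textbf{Step 3: (C2) and (C3) as convex combinations of (C1)-values.} The key observation is that (C2) is the uniform average of (C1) over $i\in V$, i.e.
\[
\text{(C2)}=\frac{1}{n}\sum_{i\in V}\left(1+\sum_{j\in V}\frac{A_{ij}^2}{\deg(i)\deg(j)}\right),
\]
and (C3) is the $\deg(i)$-weighted average of (C1), since
\[
\sum_{i\in V}\deg(i)\left(1+\sum_{j\in V}\frac{A_{ij}^2}{\deg(i)\deg(j)}\right)=\sum_{i\in V}\deg(i)+\sum_{j\in V}\frac{1}{\deg(j)}\sum_{i\in V}A_{ij}^2,
\]
and dividing by $\sum_{i\in V}\deg(i)$ yields (C3). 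Since each (C1) already lies in $[\lambda_{\min},\lambda_n]$ by Step 2, both convex combinations (C2) and (C3) do too.

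\textbf{Step 4: The equality clause.} A convex combination (with strictly positive weights) of numbers in $[\lambda_{\min},\lambda_n]$ equals $\lambda_{\min}$ if and only if each summand equals $\lambda_{\min}$; similarly for $\lambda_n$. Applying this to the representations of (C2) and (C3) in Step 3 yields the ``moreover'' statement. No step is genuinely hard; the only mildly delicate point is the sign bookkeeping in Step 1 that turns $(\mathcal{I}_i,\mathcal{I}_j)_H$ into $-A_{ij}$, but the square in the Rayleigh quotient makes the sign irrelevant.
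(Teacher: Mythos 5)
Your proof is correct and follows essentially the same route as the paper's: test Lemma \ref{lemma:main-small-large} with $\gamma=\mathcal{I}_i$ to obtain (C1), then observe that (C2) and (C3) are the unweighted and degree-weighted averages of the (C1) quantities. You merely supply details the paper leaves implicit, notably the identity $(\mathcal{I}_i,\mathcal{I}_j)_H=-A_{ij}$ for $i\neq j$ and the strict-positivity of the weights needed for the equality clause.
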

\begin{proof}
In order to prove (C1), for each $i\in V$ we consider the Rayleigh Quotient $\gamma=\mathcal{I}_i$ and we apply Lemma \ref{lemma:main-small-large}.
Furthermore, (C2) is the arithmetic mean of the constants in (C1) over $i\in V$, while (C3) is the weighted arithmetic mean of the constants in (C1) with the weights $\deg(1),\ldots,\deg(n)$. This proves (C2) and (C3).
\end{proof}

\begin{definition}Given $n,d\in\mathbb{N}$ with $n\ge d\ge 1$, let 
$$C_{n,d}:=\min\limits_{\vec v_1,\ldots,\vec v_n\in \mathbb{S}^{d-1}}\max\limits_{\varepsilon_1,\ldots,\varepsilon_n\in\{-1,1\}}\|\varepsilon_1\vec v_1+\ldots+\varepsilon_n\vec v_n\|_2,$$
where $\mathbb{S}^{d-1}$ is the unit sphere of dimension $(d-1)$ in $\R^d$ and  $\|\cdot\|_2$ is the standard Euclidean norm in $\R^d$. 
\end{definition}
The quantity $C_{n,d}$ is a known geometric constant \cite{Cnd1,Cnd2,Cnd3,Cnd4} that characterizes the best lower bound of the diameter of the $d$-dimensional zonotope $[-\frac12 \vec v_1,\frac12 \vec v_1]+\ldots+[-\frac12 \vec v_n,\frac12 \vec v_n]$ generated by $n$ unit vectors, where the summation here is the Minkowski sum of a finite number of segments. Since zonotopes have many interesting geometric properties in the theory of polyhedron  \cite{Polytope-Ziegler}, 
the constant $C_{n,d}$ is studied in discrete geometry. Interestingly, the unconstrained quadratic maximization in zero-one variables has been equivalently transformed into
\begin{equation*}
    \max\limits_{\varepsilon_1,\ldots,\varepsilon_n\in\{-1,1\}}\|\varepsilon_1\vec v_1+\ldots+\varepsilon_n\vec v_n\|_2^2
\end{equation*}
 for certain $\vec v_1,\ldots,\vec v_n$, bridging mathematical  optimizations with zonotopes \cite{Cnd4}. Also, it is known that $$C_{n,n}=\sqrt{n}<C_{n,d}$$ for all $d<n$.

The following proposition shows a relation between this constant and the largest eigenvalue of the normalized Laplacian. To the best of our knowledge, this result is new also for graphs.

\begin{proposition}
$$\lambda_n\ge \frac1n \cdot C_{n,n-m_V}^2.$$
\end{proposition}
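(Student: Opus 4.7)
The plan is to recognize $\lambda_n$ as the squared operator norm of a matrix whose rows are $n$ unit vectors living in an $(n-m_V)$-dimensional subspace, and then to evaluate this operator norm on $\pm 1$ test vectors so that the constant $C_{n,\,n-m_V}$ appears directly from its definition as a minimum.

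First I would normalize the Rayleigh quotient. Let $B := D^{-1/2}\mathcal{I}$, an $n\times m$ matrix. Substituting $g(i):=\sqrt{\deg(i)}\,f(i)$ into $\RQ(f)$ turns the min--max characterization of $\lambda_n$ from Section \ref{section:minmax} into
\[
\lambda_n \;=\; \max_{g\in\mathbb{R}^n\setminus\{0\}}\frac{\|B^{\top}g\|_2^{2}}{\|g\|_2^{2}}.
\]
A direct computation gives that the $i$-th row $B_i=\mathcal{I}_i/\sqrt{\deg(i)}$ satisfies $\|B_i\|_2^{2}=\sum_{h\in H}\mathcal{I}_{ih}^{2}/\deg(i)=1$, so $B_1,\ldots,B_n$ are $n$ unit vectors in $\mathbb{R}^m$. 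Moreover, since $Lf=0$ if and only if $\mathcal{I}^{\top}f=0$, we have $m_V=n-\mathrm{rank}(\mathcal{I})$, and hence $\mathrm{span}\{B_1,\ldots,B_n\}$ has dimension exactly $d:=n-m_V$.

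Second, I would restrict the maximum to sign vectors $g=\varepsilon\in\{-1,+1\}^n$. Then $\|\varepsilon\|_2^{2}=n$ and $B^{\top}\varepsilon=\sum_{i=1}^n\varepsilon_i B_i^{\top}$, giving
\[
\lambda_n \;\geq\; \frac{1}{n}\,\Big\|\sum_{i=1}^n\varepsilon_i B_i\Big\|_2^{2}
\]
for every choice of signs. Fixing any linear isometry from $\mathrm{span}\{B_1,\ldots,B_n\}\subset\mathbb{R}^m$ onto $\mathbb{R}^d$ turns the $B_i$ into unit vectors $\vec v_1,\ldots,\vec v_n\in\mathbb{S}^{d-1}$ while preserving the signed-sum norm. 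Because $C_{n,d}$ is defined as a minimum over all configurations of $n$ unit vectors in $\mathbb{S}^{d-1}$, we have $\max_{\varepsilon}\|\sum_i\varepsilon_i\vec v_i\|_2\geq C_{n,d}$ for this particular configuration, and taking the maximum over $\varepsilon$ on both sides of the displayed inequality yields the bound $\lambda_n\geq C_{n,\,n-m_V}^{2}/n$.

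The argument is short; the two delicate points are the rank identity $m_V=n-\mathrm{rank}(\mathcal{I})$ and the normalization of the rows of $B$, both of which fix the dimension $d$ that must feed into $C_{n,d}$. The degenerate case $d=0$ does not arise, since the standing hypothesis that no vertex has degree zero forces $\mathcal{I}\neq 0$, so $d\geq 1$ and $C_{n,d}$ is well-defined.
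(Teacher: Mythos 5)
Your proof is correct, and it is essentially the paper's argument viewed from the other side of the incidence matrix. You identify $\lambda_n$ with the largest eigenvalue of $BB^{\top}$ for $B=D^{-1/2}\mathcal{I}$ and test with sign vectors $\varepsilon\in\{-1,1\}^n$, so the factor $1/n$ comes from $\|\varepsilon\|_2^2=n$; the paper instead works on the hyperedge side via Lemma \ref{lemma:main-small-large}, takes $\gamma$ proportional to the optimal signed sum $\sum_i\varepsilon_i\vec v_i$ of the same unit vectors $\vec v_i=\mathcal{I}_i/\sqrt{\deg(i)}$, and extracts the $1/n$ from the Cauchy--Schwarz inequality $\sum_i a_i^2\ge\frac1n(\sum_i a_i)^2$. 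Since $BB^{\top}$ and $B^{\top}B$ share their nonzero spectrum, these are dual formulations of the same computation; the substantive ingredients --- the rows of $D^{-1/2}\mathcal{I}$ being unit vectors, the rank identity $\dim\mathrm{span}\{B_1,\ldots,B_n\}=n-m_V$, and the definition of $C_{n,d}$ as a minimum over configurations --- are identical in both. Your version is marginally more self-contained (it bypasses Lemma \ref{lemma:main-small-large} entirely and needs only the standard min--max principle), and your explicit remarks on the isometric identification of the span with $\R^{d}$ and on the nondegeneracy $d\ge1$ are details the paper leaves implicit.
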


\begin{proof}
The proof is based on Lemma \ref{lemma:main-small-large}. Given $i\in V$, let $\vec v_i:=\frac{1}{\sqrt{\deg(i)}}\mathcal{I}_i$ and let $\varepsilon_1,\ldots,\varepsilon_n\in\{-1,1\}$ such that $$ \|\varepsilon_1\vec v_1+\ldots+\varepsilon_n\vec v_n\|_2^2=\max\limits_{\varepsilon_1',\ldots,\varepsilon_n'\in\{-1,1\}}\|\varepsilon_1'\vec v_1+\ldots+\varepsilon_n'\vec v_n\|_2^2.$$  Let also $$\gamma:=\frac{\varepsilon_1\vec v_1+\ldots+\varepsilon_n\vec v_n}{\|\varepsilon_1\vec v_1+\ldots+\varepsilon_n\vec v_n\|_2}.$$ Then, by Lemma \ref{lemma:main-small-large} and by the fact that $\dim(\mathrm{span}(\vec v_1,\ldots,\vec v_n))=n-m_V$,  \begin{align*}
    \lambda_n&\ge \sum_{i=1}^n \langle \vec v_i,\gamma\rangle^2 =\sum_{i=1}^n |\varepsilon_i|^2\langle \vec v_i,\gamma\rangle^2
    \ge \frac1n\left(\sum_{i=1}^n \varepsilon_i\langle \vec v_i,\gamma\rangle\right)^2 \\&=\frac1n\left(\langle \sum_{i=1}^n \varepsilon_i\vec v_i,\gamma\rangle\right)^2  = \frac1n \|\varepsilon_1\vec v_1+\ldots+\varepsilon_n\vec v_n\|_2^2\ge \frac1n \cdot C_{n,n-m_V}^2.
\end{align*}
\end{proof}

We conclude this section by proving a general upper bound for $\lambda_{\min}$ and lower bound for $\lambda_n$ that only involves the multiplicity of $0$.
\begin{theorem}\label{thm:min-bound-max}
\begin{equation*}
\lambda_{\min}\le \frac{n}{n-m_V}  \le \lambda_n
\end{equation*}and one of them is an equality if and only if $\lambda_{\min}=\lambda_n$.
\end{theorem}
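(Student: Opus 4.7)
The plan is to extract this inequality purely from the trace identity (Remark on the trace) together with the ordering of the eigenvalues. Recall that $L$ has $n$ real non-negative eigenvalues $\lambda_1 \le \ldots \le \lambda_n$ with $0$ occurring with multiplicity $m_V$, so the nonzero eigenvalues are exactly $\lambda_{m_V+1}, \ldots, \lambda_n$, and by definition $\lambda_{\min} = \lambda_{m_V+1}$. This list has length $n - m_V$.

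The first step is to invoke the trace identity $\operatorname{tr}(L) = n$, which is immediate from the formula $L = \id - D^{-1}A$ and the fact that $A$ has zero diagonal. Since the sum of all eigenvalues equals the trace and the $m_V$ zero eigenvalues contribute nothing, we obtain
\begin{equation*}
    \sum_{k=m_V+1}^{n}\lambda_k = n.
\end{equation*}
The second step is to sandwich this sum: each of the $n-m_V$ terms lies in the interval $[\lambda_{\min},\lambda_n]$, hence
\begin{equation*}
    (n-m_V)\,\lambda_{\min} \;\le\; \sum_{k=m_V+1}^{n}\lambda_k = n \;\le\; (n-m_V)\,\lambda_n.
\end{equation*}
Dividing through by $n-m_V$ yields the claimed double inequality $\lambda_{\min} \le \frac{n}{n-m_V} \le \lambda_n$.

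For the equality case, note that equality in the left bound means $(n-m_V)\lambda_{\min} = \sum_{k=m_V+1}^{n}\lambda_k$, which, since every $\lambda_k$ in the sum is at least $\lambda_{\min}$, forces $\lambda_{m_V+1} = \ldots = \lambda_n$, i.e.\ $\lambda_{\min} = \lambda_n$. The analogous argument works on the right bound, and conversely if $\lambda_{\min} = \lambda_n$ then every nonzero eigenvalue equals this common value and both bounds become equalities simultaneously. There is no real obstacle here; the only thing to be careful about is correctly identifying $\lambda_{\min}$ as $\lambda_{m_V+1}$ (using that $\lambda_k = 0$ precisely for $k \le m_V$) so that the trace identity collapses to a sum over exactly $n-m_V$ terms all lying in $[\lambda_{\min},\lambda_n]$.
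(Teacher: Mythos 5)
Your proposal is correct and follows exactly the paper's own argument: the trace identity $\sum_{k=m_V+1}^{n}\lambda_k=n$ combined with bounding each of the $n-m_V$ nonzero eigenvalues by $\lambda_{\min}$ from below and $\lambda_n$ from above, with the equality case forcing all nonzero eigenvalues to coincide. No differences worth noting.
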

\begin{proof}
By Remark \ref{remark:trace}, $\sum_{i=m_V+1}^n\lambda_i=n$. Hence, since there are exactly $n-m_V$ non-zero eigenvalues $\lambda_i$ with $\lambda_{\min}\leq \lambda_i$, $$(n-m_V)\lambda_{\min}\leq \sum_{i=m_V+1}^n\lambda_i=n,$$ that is, $\lambda_{\min}\leq n/(n-m_V)$, with equality if and only if $\lambda_{\min}=\lambda_n$. Similarly, one can see that $\lambda_n\geq n/(n-m_V)$, with equality if and only if $\lambda_{\min}=\lambda_n$.
\end{proof}

\begin{ex}A hypergraph $\Gamma$ is such that the inequalities in Theorem \ref{thm:min-bound-max} are equalities with $m_V=0$ if and only if\begin{equation*}
    \lambda_1=\ldots=\lambda_n=1,
\end{equation*}therefore if and only if $L=\id$, which happens if and only if $A_{ij}=0$ for all $i\neq j\in V$, i.e. if and only if, for all $i\neq j$,
\begin{align*}
&\# \{\text{hyperedges in which }i \text{ and }j\text{ are anti-oriented}\}\\
&=\# \{\text{hyperedges in which }i \text{ and }j\text{ are co-oriented}\}.
\end{align*}This is the case, for instance, if $\Gamma$ is a hypergraph on $n$ vertices and $n$ hyperedges such that each hyperedge contains exactly one vertex.
\end{ex}

\begin{ex}
Let $\Gamma$ be a hypergraph for which the inequalities in Theorem \ref{thm:min-bound-max} are equalities, with $m_V=n-1$. Then,
\begin{equation}\label{eq:exk=n-1}
    \lambda_{1}=\ldots=\lambda_{n-1}=0\quad\text{and}\quad \lambda_n=n.
\end{equation}
Since we always have $\sum_{i=1}^n\lambda_i=n$, $\lambda_n=n$ implies \eqref{eq:exk=n-1}. By \cite[Corollary 2]{Sharp}, this happens if and only if each hyperedge contains all vertices.
\end{ex}

\begin{ex}
If $\Gamma$ is given by the union of $r$ copies of the complete graph $K_{n/r}$, then $m_V=r$ and all non-zero eigenvalues are $$\lambda=\frac{n/r}{n/r-1}=\frac{n}{n-r},$$ with multiplicity $n-r$. \end{ex}
\begin{remark}
By \cite[Lemma 4.2]{Sharp}, $\lambda_n\le \max\limits_{h\in H}|h|$. Together with Theorem \ref{thm:min-bound-max}, this implies that $$\frac{n}{n-m_V}\leq \max\limits_{h\in H}|h|.$$ Therefore, $$m_V\le n\left(1-\frac{1}{\max\limits_{h\in H}|h|}\right).$$ In the case of graphs, this says that $m_V\le n/2$ and it is immediate to check, since we are assuming that there are no isolated vertices.
\end{remark}

\section{Coloring number}\label{section:coloring}
We now generalize the notion of coloring number and we show that it is related to the eigenvalues of $L$.
\begin{definition}A \emph{proper $k$-coloring of the vertices} is a function $f:V\to \{1,\ldots,k\}$ such that $f(i)\ne f(j)$ for all $i\ne j\in h$ and for all $h\in H$. The \emph{vertex coloring number} of $\Gamma$, denoted $\chi(\Gamma)$, is the minimal $k$ such that there exists a proper $k$-coloring.
\end{definition}
\begin{remark}
Observe that the coloring number of an oriented hypergraph $\Gamma=(V,H)$ equals the coloring number of a graph $G=(V,E)$ that has the same vertices as $\Gamma$ and has, instead of each hyperedge $h$, a complete sub-graph $K_{|h|}$. Hence, the problem of computing the coloring number of a hypergraph reduces to the graph case.
\end{remark}
\begin{remark}
If $\hat{\Gamma}$ is obtained from $\Gamma$ by deleting vertices, then $\chi(\hat{\Gamma})\leq \chi(\Gamma)$.
\end{remark}

\begin{theorem}\label{thm:coloring-main}
For any oriented hypergraph $\Gamma$,
\begin{equation*}\lambda_n\ge 1+\left(1-\frac{\tilde{e}(S)}{\vol(S)}\right)\frac{1}{\chi(S)-1}\ge\lambda_1,\quad \forall S\subseteq V, S\neq \emptyset.\end{equation*}

 In particular, \begin{equation*}\lambda_n\ge\frac{\chi(\Gamma)}{\chi(\Gamma)-1}-\frac{\tilde{e}(V)}{\vol(V)}\cdot\frac{1}{\chi(\Gamma)-1}\ge \lambda_1\quad\text{ and }\quad\lambda_n\ge\frac{\chi-\tilde{h}'}{\chi-1},\end{equation*} where $\tilde{h}':=\min\limits_{\emptyset \neq S\subseteq V}\frac{\tilde{e}(S)}{\vol(S)}$.
\end{theorem}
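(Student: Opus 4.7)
The plan is, for each nonempty $S\subseteq V$, to exhibit a $(\chi(S)-1)$-dimensional test subspace of $C(V)$ whose average Rayleigh quotient equals the target value $1+(1-\tilde{e}(S)/\vol(S))/(\chi(S)-1)$, and then invoke the min-max principle of Section~\ref{section:minmax} to sandwich this average between $\lambda_1$ and $\lambda_n$. Concretely, I set $\chi:=\chi(S)$, fix a proper $\chi$-coloring of the induced subhypergraph on $S$ with color classes $S_1,\ldots,S_\chi$ (all nonempty by minimality of $\chi$), and work inside $W_S:=\mathrm{span}(\mathbf{1}_{S_1},\ldots,\mathbf{1}_{S_\chi})$, which contains $\mathbf{1}_S$.

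The key observation is the identity $\RQ(\mathbf{1}_{S_l})=1$ for every color class. Properness forces $|h\cap S_l|\le 1$ for each hyperedge $h$, so the numerator of $\RQ(\mathbf{1}_{S_l})$ collapses to $\#\{h:h\cap S_l\neq\emptyset\}$, which coincides with $\sum_{i\in S_l}\deg(i)=\vol(S_l)$, the denominator. Since the normalized indicators $\mathbf{1}_{S_l}/\sqrt{\vol(S_l)}$ form an orthonormal basis of $W_S$ with respect to $(\cdot,\cdot)$ (disjoint supports), this gives $\mathrm{tr}(LP_{W_S})=\sum_l \RQ(\mathbf{1}_{S_l})=\chi$, where $P_{W_S}$ denotes the $(\cdot,\cdot)$-orthogonal projection. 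Now let $W':=W_S\cap\{\mathbf{1}_S\}^\perp$, a $(\chi-1)$-dimensional subspace. Because $P_{W'}=P_{W_S}-P_{\mathbf{1}_S}$ and the rank-one piece contributes $\RQ(\mathbf{1}_S)=\tilde{e}(S)/\vol(S)$ to the trace, the average Rayleigh quotient over any $(\cdot,\cdot)$-orthonormal basis of $W'$ equals
\[
\frac{\chi-\tilde{e}(S)/\vol(S)}{\chi-1}=1+\frac{1-\tilde{e}(S)/\vol(S)}{\chi-1}.
\]
By the min-max principle, the maximum Rayleigh quotient on $W'$ is at least this average, giving the lower bound on $\lambda_n$, and the minimum is at most this average, giving the upper bound on $\lambda_1$.

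The particular cases then follow quickly: take $S=V$ for the first. For the second, choose $S^*$ attaining $\tilde{h}':=\min_{S}\tilde{e}(S)/\vol(S)$ and observe that $\chi(S^*)\le\chi(\Gamma)$ by the preceding Remark on vertex deletion, combined with $\tilde{h}'\le 1$ (take a singleton of minimum degree, for which $\tilde{\nu}=1$). The function $k\mapsto 1+(1-\tilde{h}')/(k-1)$ is then non-increasing in $k\ge 2$, so the bound at $\chi(S^*)$ is at least the bound at $\chi(\Gamma)$, yielding $\lambda_n\ge (\chi-\tilde{h}')/(\chi-1)$. The main obstacle is verifying the identity $\RQ(\mathbf{1}_{S_l})=1$ cleanly from the definition of $\tilde{e}$; once this makes the trace on $W_S$ land on exactly $\chi$, the Hoffman-type denominator $\chi-1$ emerges automatically and the rest is bookkeeping.
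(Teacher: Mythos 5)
Your argument is correct, and it reaches the theorem by a genuinely different route than the paper. The paper fixes the coloring classes $V_1,\ldots,V_\chi$ of the whole hypergraph, tests the Rayleigh quotient on the one-parameter family $f=t$ on $V_k$ and $1$ elsewhere, sums the resulting inequalities over $k$, and then optimizes over $t$ (the value $t=1-\chi$ producing the stated bound, $t=1$ producing the companion bound $\lambda_n\ge\tilde{e}(V)/\vol(V)$); to pass from $S=V$ to general $S$ it restricts to the sub-hypergraph on $S$ and invokes the interlacing Lemma \ref{lemma:Cauchy}. You instead compute the trace of $L$ on $W'=\mathrm{span}(\mathbf{1}_{S_1},\ldots,\mathbf{1}_{S_\chi})\cap\{\mathbf{1}_S\}^\perp$, using $\RQ(\mathbf{1}_{S_l})=1$ (which is exactly the properness of the coloring, $\#(h\cap S_l)\le 1$) and $\RQ(\mathbf{1}_S)=\tilde{e}(S)/\vol(S)$, and sandwich the average between $\lambda_1$ and $\lambda_n$. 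Your version buys two things: the denominator $\chi-1$ appears structurally as $\dim W'$ rather than emerging from a calculus optimization, and the bound for arbitrary $S$ comes for free because indicator functions supported on $S$ can be tested directly in $\Gamma$ -- no interlacing and no worrying about how degrees change in the restricted sub-hypergraph. What the paper's route buys is the extra family of bounds indexed by $t$, in particular $\lambda_n\ge\tilde{e}(V)/\vol(V)$, which your single subspace does not produce. Two shared edge cases worth a sentence in a final write-up: the statement itself degenerates when $\chi(S)=1$ (then every $h$ meets $S$ in at most one vertex and $\tilde{e}(S)=\vol(S)$, so there is nothing to prove), and in your derivation of $\lambda_n\ge(\chi-\tilde{h}')/(\chi-1)$ the minimizer $S^*$ could in principle have $\chi(S^*)=1$, but that forces $\tilde{h}'=1$ and the claimed bound reduces to the trivial $\lambda_n\ge 1$.
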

\begin{proof}
Let $\chi:=\chi(\Gamma)$ and let $V_1,\ldots,V_\chi$ be the coloring classes of $V$. Given $k\in \{1,\ldots,\chi\}$, define a function $f:V\to\R$ by
\begin{equation*}f(i):=\begin{cases}t&\text{ if }i\in V_k,\\
1&\text{ if }i\not\in V_k.\end{cases}\end{equation*}
Since $V_k\cap h$ has at most one element for all $h\in H$,
\begin{equation*}\left(\sum_{j\in h_{in}} f(j)-\sum_{j'\in h_{out}} f(j')\right)^2=\begin{cases}(t+\#h_{in}-1-\#h_{out})^2&\text{ if } V_k\cap h_{in} \ne\emptyset,\\
(t+\#h_{out}-1-\#h_{in})^2&\text{ if } V_k\cap h_{out}\ne\emptyset ,\\
(\#h_{out}-\#h_{in})^2&\text{ otherwise}.\end{cases}\end{equation*}
Hence,
\begin{align*}
\sum_{h\in H}\left(\sum_{j\in h_{in}} f(j)-\sum_{j'\in h_{out}} f(j')\right)^2=\;& \sum_{h_{in}\cap V_k\ne\emptyset}(t+\#h_{in}-1-\#h_{out})^2
\\&+\sum_{h_{out}\cap V_k\ne\emptyset}(t+\#h_{out}-1-\#h_{in})^2
\\&+\sum_{h\cap V_k=\emptyset}(\#h_{out}-\#h_{in})^2\end{align*}
and
\begin{equation*}
 \sum_{i\in V}\deg(i)f(i)^2=t^2\sum_{i\in V_k}\deg(i)+\sum_{i\not\in V_k}\deg(i).
\end{equation*}
Since
\begin{equation*}\lambda_n\ge\frac{\sum_{h\in H}\left(\sum_{j\in h_{in}} f(j)-\sum_{j'\in h_{out}} f(j')\right)^2}{\sum_{i\in V}\deg(i)f(i)^2}\ge \lambda_1,\end{equation*}
by taking the summation of the above inequalities over all $k$'s we obtain
\begin{align*}
\lambda_n\sum_{k=1}^\chi\left(t^2\sum_{i\in V_k}\deg(i)+\sum_{i\not\in V_k}\deg(i)\right)\ge\;& \sum_{k=1}^\chi\sum_{h_{in}\cap V_k\ne\emptyset}(t+\#h_{in}-1-\#h_{out})^2
\\&+\sum_{k=1}^\chi\sum_{h_{out}\cap V_k\ne\emptyset}(t+\#h_{out}-1-\#h_{in})^2
\\&+\sum_{k=1}^\chi\sum_{h\cap V_k=\emptyset}(\#h_{out}-\#h_{in})^2.
\end{align*}
This can be simplified as
 \begin{align*}
&\lambda_n\left(\sum_{i\in V}\deg(i)(t^2-1)+\chi \sum_{i\in V}\deg(i)\right)\\
\ge\;& \sum_{h\in H} \#h_{in}(t+\#h_{in}-1-\#h_{out})^2
\\&+\sum_{h\in H}\#h_{out}(t+\#h_{out}-1-\#h_{in})^2
\\&+\sum_{h\in H} (\chi-\#h_{in}-\#h_{out}) (\#h_{out}-\#h_{in})^2
\\=\;&(t-1)^2\sum_{h\in H}(\#h_{in}+\#h_{out})
\\&+(2(t-1)+\chi)\sum_{h\in H}(\#h_{out}-\#h_{in})^2.
\end{align*}
Since $\vol(V)=\sum_{h\in H}(\#h_{in}+\#h_{out})$ and $\tilde{e}(V)=\sum_{h\in H}(\#h_{out}-\#h_{in})^2$,
\begin{equation*}
\lambda_n\vol(V)(t^2-1+\chi)\ge (t-1)^2\vol(V)+(2(t-1)+\chi)\tilde{e}(V),
\end{equation*}
hence
\begin{equation*}\lambda_n\ge \frac{(t-1)^2\vol(V)+(2(t-1)+\chi)\tilde{e}(V)}{\vol(V)(t^2-1+\chi)}=1+
\frac{\tilde{e}(V)-\vol(V)}{\vol(V)}\cdot\frac{2(t-1)+\chi}{t^2-1+\chi}.\end{equation*}
Now, using the fact that 
\begin{equation*}
    \max\limits_{t\in\R}\frac{2(t-1)+\chi}{t^2-1+\chi}=1 \text{ at }t=1\quad\text{and}\quad \min\limits_{t\in\R}\frac{2(t-1)+\chi}{t^2-1+\chi}=\frac{-1}{\chi-1}\text{ at }t=1-\chi,
\end{equation*}it follows that
\begin{equation}\label{eq:lambda-n-e(V)}
\lambda_n\ge \begin{cases}\frac{\tilde{e}(V)}{\vol(V)}&\text{ if } \tilde{e}(V)\ge \vol(V),\\
\frac{\chi}{\chi-1}-\frac{\tilde{e}(V)}{\vol(V)}\cdot\frac{1}{\chi-1}&\text{ if } \tilde{e}(V)< \vol(V).\end{cases}
\end{equation}
In summary, \begin{equation*}\left\{\frac{\chi}{\chi-1}-\frac{\tilde{e}(V)}{\vol(V)}\cdot\frac{1}{\chi-1},
\frac{\tilde{e}(V)}{\vol(V)}\right\}\subset [\lambda_1,\lambda_n].\end{equation*}

Now, given $V'\subset V$, let $H':=\{h\cap V':h\in H\}$. Then, $\Gamma':=(V',H')$ is the restricted sub-hypergraph of $H$ on $V'$.  By Lemma \ref{lemma:Cauchy}, \begin{equation*}\lambda_n(\Gamma)\ge \lambda_{\max}(\Gamma')\ge \lambda_1(\Gamma')\ge \lambda_1(\Gamma).\end{equation*} Consequently,
\begin{equation*}\left\{\left.\frac{\tilde{e}(S)}{\vol(S)},\;\;\frac{\chi(S)}{\chi(S)-1}-\frac{\tilde{e}(S)}{\vol(S)}\cdot\frac{1}{\chi(S)-1}
\;\right|\;S\subset V,S\ne\emptyset\right\}\subset [\lambda_1(\Gamma),\lambda_n(\Gamma)].\end{equation*}
This completes the proof.
\end{proof}

\begin{corollary}\label{cor:coloring-c-difference}
Let $\Gamma$ be an oriented hypergraph such that $|\#h_{in}-\#h_{out}|=c$ for all $h\in H$, for some $c\geq 0$. Then,
\begin{equation}\label{eq:cor:coloring-c-difference1}
\lambda_n\ge 1+\frac{\vol(V)-c^2\cdot m}{\vol(V)}\cdot\frac{1}{\chi(V)-1}\ge \lambda_1,
\end{equation}where $m:=\# H$. If we further assume that $\Gamma$ is $r$--uniform (where $r=c+2l$ for some $l\in \mathbb{Z}_{\ge0}$), then 
\begin{equation}\label{eq:cor:coloring-c-difference2}
\lambda_n\ge \frac{\chi}{\chi-1}-\frac{c^2}{r}\cdot\frac{1}{\chi-1}\ge \lambda_1.
\end{equation}
\end{corollary}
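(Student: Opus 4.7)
The plan is to derive Corollary~\ref{cor:coloring-c-difference} directly from Theorem~\ref{thm:coloring-main} by specializing to $S = V$ and evaluating $\tilde{e}(V)$ and $\vol(V)$ under the given structural assumptions.

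First I would compute $\tilde{e}(V)$. By definition, $\tilde{e}(V) = \sum_{h \in H}(\#h_{in} - \#h_{out})^2$, so the hypothesis $|\#h_{in} - \#h_{out}| = c$ for every $h \in H$ immediately yields $\tilde{e}(V) = c^2 m$. Substituting this into the bound from Theorem~\ref{thm:coloring-main} with $S = V$ gives
\begin{equation*}
\lambda_n \ge \frac{\chi}{\chi-1} - \frac{c^2 m}{\vol(V)} \cdot \frac{1}{\chi-1} \ge \lambda_1,
\end{equation*}
and a routine rearrangement as $1 + \frac{\vol(V) - c^2 m}{\vol(V)(\chi-1)}$ produces \eqref{eq:cor:coloring-c-difference1}.

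For \eqref{eq:cor:coloring-c-difference2}, I would use the $r$-uniformity to compute $\vol(V)$. Since $\vol(V) = \sum_{i \in V} \deg(i)$ counts each incidence once on the vertex side and $\sum_{h \in H} \#h$ counts the same incidences on the hyperedge side, we have $\vol(V) = \sum_{h \in H} \#h = r m$. Plugging $\vol(V) = rm$ and $\tilde{e}(V) = c^2 m$ into \eqref{eq:cor:coloring-c-difference1} cancels the $m$ and yields exactly $\frac{\chi}{\chi-1} - \frac{c^2}{r(\chi-1)}$, as claimed. The parity condition $r = c + 2l$ for some $l \in \mathbb{Z}_{\ge 0}$ is merely the feasibility condition for the existence of a hyperedge of size $r$ with $|\#h_{in} - \#h_{out}| = c$ (so that $\#h_{in}$ and $\#h_{out}$ are nonnegative integers summing to $r$), and does not enter the inequality itself.

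There is no genuine obstacle here; the whole content of the corollary is a direct specialization of Theorem~\ref{thm:coloring-main}. The only thing to be careful about is the bookkeeping identity $\vol(V) = \sum_{h \in H} \#h$, which I would state explicitly before using it, and the algebraic rewriting between the two equivalent forms of the lower bound on $\lambda_n$.
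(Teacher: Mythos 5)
Your proposal is correct and matches the paper's proof: both specialize Theorem \ref{thm:coloring-main} to $S=V$, use $|\#h_{in}-\#h_{out}|=c$ to get $\tilde{e}(V)=c^2 m$, and use $r$-uniformity to get $\vol(V)=rm$ for the second inequality. The extra bookkeeping you supply (the incidence-counting identity $\vol(V)=\sum_{h\in H}\#h$ and the role of the parity condition $r=c+2l$) is consistent with, and slightly more explicit than, what the paper states.
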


\begin{proof}
Since $|\#h_{in}-\#h_{out}|=c$ for all $h\in H$, $\tilde{e}(V)=c^2\cdot m$. By Theorem \ref{thm:coloring-main}, we get \eqref{eq:cor:coloring-c-difference1}. The further assumption that $\Gamma$ is $r$--uniform implies $r\cdot m=\vol(V)$. Thus, $$\frac{\tilde{e}(V)}{\vol(V)}=\frac{c^2}{r}$$ and we immediately obtain \eqref{eq:cor:coloring-c-difference2}.
\end{proof}

\begin{corollary}\label{cor:coloring-c-signless}
Let $\Gamma=(V,H)$ be an oriented hypergraph such that $\#h_{in}=c$ and $\#h_{out}=0$ for all $h\in H$, for some $c\in \mathbb{N}$. Then, 
\begin{equation}\label{eq:cor:coloring-c-signless}
 \lambda_1\leq \frac{\chi(\Gamma)-c}{\chi(\Gamma)-1}\quad\text{and} \quad \lambda_n=c.
\end{equation}If, in addition, $\Gamma$ is $c$--complete, then $\chi(\Gamma)=n$ and
\begin{equation*}
    \lambda_1=\ldots=\lambda_{n-1}=\frac{n-c}{n-1}.
\end{equation*}
\end{corollary}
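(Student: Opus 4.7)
The plan is to split the corollary into three claims: $\lambda_1\le (\chi-c)/(\chi-1)$ via Corollary \ref{cor:coloring-c-difference}; $\lambda_n=c$ via the constant test function together with \cite[Lemma 4.2]{Sharp}; and the $c$-complete case via an explicit diagonalization of $L$.

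For the first claim, the hypotheses $\#h_{in}=c$ and $\#h_{out}=0$ make $\Gamma$ a $c$-uniform hypergraph with $|\#h_{in}-\#h_{out}|=c$ on every hyperedge, so Corollary \ref{cor:coloring-c-difference} applies with $r=c$. Substituting into \eqref{eq:cor:coloring-c-difference2} simplifies the right-hand side to $(\chi-c)/(\chi-1)$, giving exactly the desired inequality. For the second claim, I would evaluate the Rayleigh quotient at the constant function $f\equiv 1$: the numerator is $\sum_{h\in H}(c-0)^2=mc^2$ and the denominator is $\sum_{i\in V}\deg(i)=\sum_{h\in H}\#h=mc$, so $\RQ(\mathbf{1})=c$ and therefore $\lambda_n\ge c$. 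The matching upper bound $\lambda_n\le\max_{h\in H}|h|=c$ is the inequality recorded in the remark after Theorem \ref{thm:min-bound-max} (originally \cite[Lemma 4.2]{Sharp}).

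For the third claim, assuming $c\ge 2$, I would first observe that $c$-completeness forces $\chi(\Gamma)=n$: any two distinct vertices $i,j$ can be completed to a $c$-subset of $V$, which is a hyperedge by $c$-completeness, so any proper coloring must assign $i$ and $j$ different colors. For the spectrum I would exploit vertex-transitivity: every vertex has degree $\binom{n-1}{c-1}$, every unordered pair lies in $\binom{n-2}{c-2}$ common hyperedges, and since all vertices are inputs every such pair is co-oriented, so $A_{ij}=-\binom{n-2}{c-2}$ for $i\ne j$. Hence $A=-\binom{n-2}{c-2}(J-I)$ with $J$ the all-ones matrix, and using $\binom{n-2}{c-2}/\binom{n-1}{c-1}=(c-1)/(n-1)$ one obtains
\[
L \;=\; I - D^{-1}A \;=\; \frac{n-c}{n-1}\,I + \frac{c-1}{n-1}\,J.
\]
Since $J$ has eigenvalue $n$ on $\mathbf{1}$ and eigenvalue $0$ with multiplicity $n-1$ on $\mathbf{1}^\perp$, the spectrum of $L$ consists of $c$ (with eigenvector $\mathbf{1}$) and $(n-c)/(n-1)$ with multiplicity $n-1$, as claimed.

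I do not anticipate a substantive obstacle: the only care needed is the combinatorial identity $\binom{n-2}{c-2}/\binom{n-1}{c-1}=(c-1)/(n-1)$ and the observation that once the eigenvalues of $L$ are determined by vertex-symmetry, the stated equalities and multiplicities follow immediately. The degenerate case $c=1$ (in which $L=I$ and every eigenvalue is $1$) is best sidestepped by interpreting the formula $(\chi-c)/(\chi-1)$ only for $c\ge 2$.
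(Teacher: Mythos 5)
Your proposal is correct, and for two of the three claims it follows the paper exactly: the bound $\lambda_1\le(\chi-c)/(\chi-1)$ is obtained in both cases from Corollary \ref{cor:coloring-c-difference} with $r=c$, and the identity $\lambda_n=c$ rests in both cases on \cite[Lemma 4.2]{Sharp} (your version is slightly more self-contained, since you supply the matching lower bound $\lambda_n\ge\RQ(\mathbf{1})=c$ elementarily rather than quoting the equality outright). Where you genuinely diverge is the $c$--complete case. The paper guesses the explicit test functions $f_k$ with $f_k(k)=n-1$ and $f_k(i)=-1$ otherwise, verifies $Lf_k=\frac{n-c}{n-1}f_k$ by a direct count, and then uses $\dim\mathrm{span}(f_1,\ldots,f_n)=n-1$ together with $\lambda_n=c$ to pin down the multiplicity. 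You instead observe that all pairs are co-oriented, so $A=-\binom{n-2}{c-2}(J-I)$ and $D=\binom{n-1}{c-1}\id$, whence $L=\frac{n-c}{n-1}\id+\frac{c-1}{n-1}J$, and the full spectrum (including the identification of $c$ as the eigenvalue of $\mathbf{1}$) drops out of the known spectrum of $J$. The two computations are of course equivalent --- the paper's $f_k=n\,e_k-\mathbf{1}$ are exactly a spanning set of the $0$--eigenspace of $J$ --- but your structural form of $L$ is cleaner, avoids the separate multiplicity argument, and makes the ratio $\binom{n-2}{c-2}/\binom{n-1}{c-1}=(c-1)/(n-1)$ the only computation needed. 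Your caveat about $c=1$ is apt but concerns the statement rather than your proof: for $c=1$ the $c$--complete hypergraph has $\chi=1$ (the coloring condition is vacuous on singleton hyperedges), so ``$\chi(\Gamma)=n$'' fails, even though $L=\id$ still gives $\lambda_1=\cdots=\lambda_{n-1}=1=(n-c)/(n-1)$; the paper silently assumes $c\ge2$ here as well.
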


\begin{proof}
By construction, $\Gamma$ is a $c$--uniform, bipartite hypergraph. Therefore, by \cite[Lemma 4.2]{Sharp}, $\lambda_n=c$. Also, by Corollary \ref{cor:coloring-c-difference} with $r=c$, we get that $$\lambda_1\le\frac{\chi-c}{\chi-1}.$$
If, in addition, $\Gamma$ is $c$--complete, clearly $\chi(\Gamma)=n$ and each vertex has degree ${n-1\choose c-1}$. Now, for each vertex $k$, let $f_k:V\to\R$ be defined by $f_k(k):=n-1$ and $f_k(i):=-1$ for $i\neq k$. Then,
$$L f_k(k) = \frac{1}{\deg(k)}\sum_{h\ni k}(n-1+(c-1)(-1))=n-c = \frac{n-c}{n-1}\cdot f_k(k)$$ and for $i\neq k$,
\begin{align*} 
L f_k(i) &=\frac{1}{\deg(i)}\left(\sum_{h\ni i,h\not\ni k}(-c)+\sum_{h\supset \{i,k\}}(n-c)\right)\\
&= -c +\frac{1}{{n-1\choose c-1}}\cdot {n-2\choose c-2}\cdot n\\
&=\frac{n-c}{n-1}\cdot f_k(i).\end{align*}
Therefore $Lf_k=f_k\cdot ((n-c)/(n-1))$, which proves that $(n-c)/(n-1)$ is an eigenvalue and the functions $f_k$ are corresponding eigenfunctions. Now, since $$\dim(\mathrm{span}(f_1,\ldots,f_n))=n-1$$ and since $\lambda_n=c$, the multiplicity of $(n-c)/(n-1)$ is $n-1$. Hence,
$$\lambda_1=\ldots=\lambda_{n-1}=\frac{n-c}{n-1}.$$
\end{proof}

\begin{corollary}\label{cor:boundcoloring}
If $\#h_{in}=\#h_{out}$ for all $h\in H$,
\begin{equation*}\lambda_n\ge \frac{\chi(\Gamma)}{\chi(\Gamma)-1}=\lambda_n(K_\chi),\end{equation*}
where $K_\chi$ is the complete graph on $\chi$ vertices.
\end{corollary}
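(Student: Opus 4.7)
The plan is to deduce this immediately from Theorem \ref{thm:coloring-main} by choosing the test set $S = V$. Under the hypothesis $\#h_{in} = \#h_{out}$ for every $h \in H$, the quantity
\begin{equation*}
\tilde{e}(V) = \sum_{h \in H}\bigl(\#\text{inputs of }h\text{ in }V - \#\text{outputs of }h\text{ in }V\bigr)^2 = \sum_{h \in H}(\#h_{in} - \#h_{out})^2
\end{equation*}
vanishes identically, and $\chi(V) = \chi(\Gamma)$. Plugging $S = V$ into the lower bound
\begin{equation*}
\lambda_n \ge 1 + \left(1 - \frac{\tilde{e}(S)}{\vol(S)}\right)\frac{1}{\chi(S)-1}
\end{equation*}
from Theorem \ref{thm:coloring-main} therefore collapses the middle factor to $1$ and yields
\begin{equation*}
\lambda_n \ge 1 + \frac{1}{\chi(\Gamma)-1} = \frac{\chi(\Gamma)}{\chi(\Gamma)-1},
\end{equation*}
which is the desired bound.

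The remaining identification $\chi/(\chi-1) = \lambda_n(K_\chi)$ is a standard computation: the adjacency matrix of $K_\chi$ has spectrum $\{\chi-1, -1, \ldots, -1\}$ with $-1$ of multiplicity $\chi-1$, and since $K_\chi$ is $(\chi-1)$--regular one has $L = \id - A/(\chi-1)$, whose spectrum is $\{0, \chi/(\chi-1), \ldots, \chi/(\chi-1)\}$. In particular $\lambda_n(K_\chi) = \chi/(\chi-1)$.

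There is no real obstacle here, since all the analytic work was already carried out in the proof of Theorem \ref{thm:coloring-main}; the corollary simply records the strongest conclusion one gets from that theorem when the ``imbalance term'' $\tilde{e}(V)$ vanishes, as occurs precisely when every hyperedge is balanced between inputs and outputs.
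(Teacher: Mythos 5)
Your proof is correct and follows essentially the same route as the paper: the paper deduces the corollary by setting $c=0$ in Corollary \ref{cor:coloring-c-difference}, which is itself just Theorem \ref{thm:coloring-main} applied with $S=V$ and the observation that $\tilde{e}(V)=c^2 m$ vanishes when every hyperedge is balanced — exactly your computation. Your verification that $\lambda_n(K_\chi)=\chi/(\chi-1)$ via the adjacency spectrum of the complete graph matches the standard fact the paper cites.
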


\begin{proof}
It follows directly by taking $c=0$ in Corollary \ref{cor:coloring-c-difference}. It is known that a complete graph $K_N$ on $N$ vertices has the maximal eigenvalue $N/(N-1)$.
\end{proof}

\begin{corollary}\label{cor:coloring1}
If there exists a hypergraph $\hat{\Gamma}$ obtained from $\Gamma$ by weak-vertex deletion of some vertices, such that 
\begin{equation*}
    \# h_{in}(\hat{\Gamma})=\# h_{out}(\hat{\Gamma})\quad\text{for each }h\in H,
\end{equation*}then
\begin{equation*}
    \lambda_n\ge \frac{\chi(\Gamma)}{\chi(\Gamma)-1}.
\end{equation*}
\end{corollary}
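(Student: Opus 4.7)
The plan is to chain together three ingredients that are all already available in the paper: the Cauchy-type interlacing for weak vertex deletion (Lemma \ref{lemma:Cauchy}), the coloring bound of Corollary \ref{cor:boundcoloring} applied to $\hat{\Gamma}$, and the monotonicity $\chi(\hat{\Gamma}) \leq \chi(\Gamma)$ under weak vertex deletion (stated in the remark preceding Theorem \ref{thm:coloring-main}).

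Let $r$ denote the number of vertices removed to produce $\hat{\Gamma}$, so that $\hat{\Gamma}$ has $n-r$ vertices and its largest eigenvalue is $\lambda_{n-r}(\hat{\Gamma})$. Applying Lemma \ref{lemma:Cauchy} at index $k=n-r$ gives $\lambda_{n-r}(\hat{\Gamma}) \leq \lambda_n(\Gamma)$. Since by hypothesis $\#h_{in}(\hat{\Gamma}) = \#h_{out}(\hat{\Gamma})$ for every $h$, Corollary \ref{cor:boundcoloring} applies to $\hat{\Gamma}$ and yields
$$\lambda_{n-r}(\hat{\Gamma}) \geq \frac{\chi(\hat{\Gamma})}{\chi(\hat{\Gamma})-1}.$$
Finally, any proper coloring of $\Gamma$ restricts to a proper coloring of $\hat{\Gamma}$ (two distinct vertices of $h\setminus\{\hat v\}$ remain distinct vertices of $h$), hence $\chi(\hat{\Gamma}) \leq \chi(\Gamma)$, which together with the fact that $x\mapsto x/(x-1)$ is strictly decreasing on $(1,\infty)$ gives
$$\frac{\chi(\hat{\Gamma})}{\chi(\hat{\Gamma})-1} \geq \frac{\chi(\Gamma)}{\chi(\Gamma)-1}.$$
Concatenating these three inequalities yields the claim.

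No step is expected to be technically demanding: the corollary is essentially a bookkeeping combination of previously established tools. The only point worth verifying is that the monotonicity of $\chi$ survives the weak (rather than strong) notion of deletion used in Lemma \ref{lemma:Cauchy}, which is immediate because weak deletion only shrinks hyperedges and therefore cannot introduce new constraints on a coloring.
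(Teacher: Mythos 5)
Your proof is correct and follows exactly the same route as the paper's: interlacing (Lemma \ref{lemma:Cauchy}) to get $\lambda_n(\Gamma)\ge\lambda_{n-r}(\hat{\Gamma})$, Corollary \ref{cor:boundcoloring} applied to $\hat{\Gamma}$, and the monotonicity $\chi(\hat{\Gamma})\le\chi(\Gamma)$ combined with the decrease of $x\mapsto x/(x-1)$. Your extra check that weak deletion preserves the coloring monotonicity is a welcome clarification but does not change the argument.
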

\begin{proof}
Assume that $\hat{\Gamma}$ is obtained from $\Gamma$ by weak-vertex deletion of $r$ vertices. By Lemma \ref{lemma:Cauchy} and Corollary \ref{cor:boundcoloring},
\begin{equation*}
\lambda_n(\Gamma)\geq\lambda_{n-r}(\hat{\Gamma})=\lambda_n(\hat{\Gamma})\geq \frac{\chi(\hat{\Gamma})}{\chi(\hat{\Gamma})-1}.
\end{equation*}Now, since $\chi(\hat{\Gamma})\leq \chi(\Gamma)$,
\begin{equation*}
    \frac{\chi(\hat{\Gamma})}{\chi(\hat{\Gamma})-1}\geq \frac{\chi(\Gamma)}{\chi(\Gamma)-1}.
\end{equation*}Hence,
\begin{equation*}
    \lambda_n\ge \frac{\chi(\Gamma)}{\chi(\Gamma)-1}.
\end{equation*}
\end{proof}
\begin{remark}
The results on the chromatic number above closely relate to the Hoffman's bound for graph, which states that, for a graph $G$,
\begin{equation*}
    \chi(G)\geq 1-\frac{\lambda_{\max}(A)}{\lambda_{\min}(A)},
\end{equation*}where $\lambda_{\max}(A)$ and $\lambda_{\min}(A)$ are the largest and the smallest eigenvalues of the adjacency matrix \cite{Hoffman1,Hoffman2}.
\end{remark}
In the setting of Corollary \ref{cor:boundcoloring}, when does $\lambda_n = \chi(\Gamma)/(\chi(\Gamma)-1)$ hold? The next proposition answers this question.

  \begin{proposition}\label{pro:sharp-chromatic}
  For a hypergraph $\Gamma$ with  $\#h_{in}=\#h_{out}$ for each $h$ and chromatic number $\chi$, the following are equivalent: 
  \begin{enumerate}
      \item $\lambda_n=\chi/(\chi-1)$.
      \item  The vertex set can be partitioned as $V= V_1\sqcup\ldots\sqcup V_\chi$ such that:
      
      \begin{itemize}
          \item $\#(h\cap V_k)\in\{0,1\}$, for all $h$ and for all $k$,
          \item $(\chi-1)|\deg(i)$ for all $i\in V$, and
          \item For all $k$ and for all $i\not\in V_k$,
          \begin{align*}
             &\#\{h\in H: i\text{ and }h\cap V_k\text{ anti-or.}\}-\#\{h\in H: i\text{ and }h\cap V_k\text{ co-or.}\}\\
             =&\frac{\deg(i)}{\chi-1}. 
          \end{align*}
        
      \end{itemize}   
  
  \end{enumerate}  
  \end{proposition}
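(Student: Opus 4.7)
The plan is to use the test functions $f_k\colon V\to\R$ given by $f_k(i)=1-\chi$ for $i\in V_k$ and $f_k(i)=1$ otherwise, i.e.\ $f_k=\mathbf{1}-\chi\,\mathbf{1}_{V_k}$. These are the extremal functions (with $t=1-\chi$) that drive the bound $\lambda_n\ge \chi/(\chi-1)$ in Theorem \ref{thm:coloring-main}, and they should play the role of eigenfunctions for the eigenvalue $\chi/(\chi-1)$.

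For $(1)\Rightarrow(2)$: Since $\chi(\Gamma)=\chi$, pick any proper $\chi$-coloring $V_1,\ldots,V_\chi$, so condition 1 holds automatically. Revisit the proof of Theorem \ref{thm:coloring-main}: with $\tilde{e}(V)=0$ (forced by $\#h_{in}=\#h_{out}$) and $t=1-\chi$, the bound $\lambda_n\ge \chi/(\chi-1)$ is obtained by summing, over $k=1,\ldots,\chi$, the $\chi$ individual inequalities
\begin{equation*}
\lambda_n\sum_{i\in V}\deg(i)f_k(i)^2\;\ge\; \sum_{h\in H}\Bigl(\sum_{i\in h_{in}}f_k(i)-\sum_{j\in h_{out}}f_k(j)\Bigr)^2.
\end{equation*}
The assumed equality $\lambda_n=\chi/(\chi-1)$ forces each summand to be an equality, so every $f_k$ realizes the maximum of the Rayleigh quotient and is an eigenfunction of $L$ with eigenvalue $\chi/(\chi-1)$. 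Expanding $Lf_k(i)$ for $i\in V_l$ with $l\ne k$, using condition 1 (to kill intra-class contributions) and the harmonicity of constants ($\sum_{j\ne i}A_{ij}=\deg(i)$, which holds since $\#h_{in}=\#h_{out}$), one obtains $Lf_k(i)=(\chi/\deg(i))\sum_{j\in V_k}A_{ij}$. Equating this with $(\chi/(\chi-1))f_k(i)=\chi/(\chi-1)$ yields condition 3, and integrality of the left-hand side (a sum of integer entries $A_{ij}$) then yields condition 2.

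For $(2)\Rightarrow(1)$: Running the pointwise computation in reverse, conditions 1 and 3 together with $\sum_{j\ne i}A_{ij}=\deg(i)$ directly verify $Lf_k=(\chi/(\chi-1))f_k$ for every $k$, so $\chi/(\chi-1)\in \operatorname{Spec}(L)$; combined with Corollary \ref{cor:boundcoloring} this yields $\lambda_n\ge \chi/(\chi-1)$. For the reverse inequality I orthogonally decompose $C(V)=W\oplus W^{\bot}$, where $W=\mathrm{span}\{\mathbf{1}_{V_1},\ldots,\mathbf{1}_{V_\chi}\}$ and the orthogonality is taken in the inner product of Definition \ref{def:scalarfg}; a direct check using conditions 1 and 3 gives $L\mathbf{1}_{V_k}=(\chi/(\chi-1))\mathbf{1}_{V_k}-\mathbf{1}/(\chi-1)\in W$, so $L$ preserves $W$ and hence $W^{\bot}$, and $L|_W$ has spectrum $\{0,\chi/(\chi-1)\}$ with multiplicities $1$ and $\chi-1$. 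The main obstacle is then to bound $\lambda_n(L|_{W^{\bot}})\le \chi/(\chi-1)$: for $g\in W^{\bot}$, the constraint $\sum_{i\in V_k}\deg(i)g(i)=0$ combines with the balance of condition 3 to give the identity $\sum_{i\in V_l}g(i)\sum_{j\in V_k}A_{ij}=0$ for every $l\ne k$, which lets one rewrite the cross terms in $g^TAg$ to conclude $g^T(A+D/(\chi-1))g\ge 0$, equivalently $\RQ(g)\le \chi/(\chi-1)$, closing the argument.
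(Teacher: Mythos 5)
Your direction $(1)\Rightarrow(2)$ is correct and is essentially the paper's own argument: equality in the summed bound forces each individual inequality $\lambda_n\ge\RQ(f_k)$ to be tight, hence each $f_k$ is an eigenfunction of $\lambda_n=\chi/(\chi-1)$, and evaluating $Lf_k(i)=\frac{\chi}{\deg(i)}\sum_{j\in V_k}A_{ij}$ at $i\notin V_k$ yields the third bullet, with the divisibility condition following from integrality. The trouble is in $(2)\Rightarrow(1)$. You are right that verifying $Lf_k=\frac{\chi}{\chi-1}f_k$ only gives $\lambda_n\ge\chi/(\chi-1)$ and that a matching upper bound is needed; this is precisely the point the paper's proof glosses over, since it passes from ``$f_k$ is an eigenfunction for $\lambda_n$'' to ``$Lf_k=\frac{\chi}{\chi-1}f_k$'' and back, and the backward passage already presupposes $\lambda_n=\chi/(\chi-1)$.

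However, your argument for the upper bound does not work, and in fact cannot be repaired. The identity $\sum_{i\in V_l}g(i)\sum_{j\in V_k}A_{ij}=0$ involves $g$ linearly on the $V_l$ side only: it constrains $\sum_{i\in V_l,\,j\in V_k}g(i)A_{ij}$, not the bilinear quantity $\sum_{i\in V_l,\,j\in V_k}g(i)A_{ij}g(j)$, so it gives no control over $g^{\top}Ag$ on $W^{\bot}$. Worse, the target inequality $g^{\top}(A+D/(\chi-1))g\ge0$ on $W^{\bot}$ is false in general, and so is the implication $(2)\Rightarrow(1)$ itself. Take $\Gamma$ to be the disjoint union of a $6$--cycle $1\,2\,3\,4\,5\,6$ and a triangle $x\,y\,z$, each edge oriented with one input and one output. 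Then $\#h_{in}=\#h_{out}=1$ for all $h$ and $\chi=3$, and the partition $V_1=\{1,4,x\}$, $V_2=\{2,5,y\}$, $V_3=\{3,6,z\}$ satisfies all three bullets of condition $(2)$, since every vertex has exactly $\deg(i)/2$ neighbours in each of the two other classes; yet $\lambda_n=2>3/2=\chi/(\chi-1)$ because $C_6$ is bipartite. Concretely, the $\pm1$--alternating function on the $6$--cycle extended by $0$ on the triangle lies in your $W^{\bot}$ and has $g^{\top}(A+D/2)g=-6<0$ and $\RQ(g)=2$. So the obstacle you flagged in this direction is real, but it is a defect of the statement (and of the paper's proof of this direction), not something your decomposition can close; only $(1)\Rightarrow(2)$ survives.
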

  \begin{proof} We first show a direct proof of Corollary \ref{cor:boundcoloring}, which states that $\lambda_n\leq \chi/(\chi-1)$. Let $V_1,\ldots,V_\chi$ be the coloring classes of $V$. Given $k\in \{1,\ldots,\chi\}$, define a function $f_k:V\to\R$ by
\begin{equation*}f_k(i)=\begin{cases}\chi-1,&\text{ if }i\in V_k\\
-1&\text{ if }i\not\in V_k.\end{cases}\end{equation*}
Since $\#h_{in}=\#h_{out}$ and $V_k\cap h$ has at most one element for all $h\in H$,
\begin{equation*}\left(\sum_{j\in h_{in}} f_k(j)-\sum_{j'\in h_{out}} f_k(j')\right)^2=\begin{cases}\chi^2&\text{ if } V_k\cap h\ne\emptyset,\\
0&\text{ otherwise}.\end{cases}\end{equation*}
Hence,
\begin{align}\nonumber
\lambda_n&\ge\frac{\sum_{h\in H}\left(\sum_{j\in h_{in}} f_k(j)-\sum_{j'\in h_{out}} f_k(j')\right)^2}{\sum_{i\in V}\deg(i)f_k(i)^2}\\
&=\frac{\chi^2\sum_{\substack{h\in H:\\ h\cap V_k\ne\emptyset}}1}{\sum_{i\in V_k}\deg(i)(\chi-1)^2+\sum_{i\not\in V_k}\deg(i)}.\label{eq:f_k-1}\end{align}
That is, for all $k$,
\begin{equation}\label{eq:f_k-2}
\lambda_n\cdot \left(\sum_{i\in V_k}\deg(i)((\chi-1)^2-1)+\sum_{i\in V}\deg(i)\right)\ge\chi^2\sum_{i\in V_k}\deg(i).\end{equation}
Summing up the above inequalities for all $k$,
\begin{equation*}
    \lambda_n\cdot \left(\sum_{i\in V}\deg(i)((\chi-1)^2-1)+\chi \sum_{i\in V}\deg(i)\right)\ge \chi^2\sum_{i\in V}\deg(i),
\end{equation*} and thus $\lambda_n(\chi-1)\ge \chi$. 

Now, it is clear that $\lambda_n=\chi/(\chi-1)$ if and only if \eqref{eq:f_k-2} (equivalently \eqref{eq:f_k-1}) is an equality for all $k$. Therefore, $\lambda_n=\chi/(\chi-1)$ if and only if $f_k$ is an eigenfunction for $\lambda_n$ for each $k=1,\ldots,\chi$. This is the case if and only if $$L f_k = \frac{\chi}{\chi-1}  f_k,\quad k=1,\ldots,\chi$$ that is,

$$
L f_k(i) =\frac{1}{\deg(i)}\left(\sum\limits_{h_{in}\ni i}\chi-
\sum\limits_{h_{out}\ni i}(-\chi)\right)=\chi= \frac{\chi}{\chi-1}  f_k(i)$$
for all $k$ and for all $i\in V_k$. Thus, $\lambda_n=\chi/(\chi-1)$ is equivalent to
$$
L f_k(i) = \frac{\chi}{\chi-1}  f_k(i),\quad \text{for all $k$ and for all } i\not\in V_k.
$$
That is,
$$
\frac{1}{\deg(i)}\sum\limits_{h\ni i,\,h\cap V_k\ne\emptyset}\left(\sum\limits_{i\text{ and }h\cap V_k\text{ co-or.}}\chi+\sum\limits_{i\text{ and }h\cap V_k\text{ anti-or.}}(-\chi)\right)=-\frac{\chi}{\chi-1},
$$
which is equivalent to
 $$\#\{h\in H: i\text{ and }h\cap V_k\text{ anti-or.}\}-\#\{h\in H: i\text{ and }h\cap V_k\text{ co-or.}\}=\frac{\deg(i)}{\chi-1}.$$
 In particular, $(\chi-1)|\deg(i)$ for all $i$. 
  \end{proof}

  \begin{corollary}\label{cor:c-complete}
  Given two natural numbers $n\ge 2$ and $c\ge 1$, let $\Gamma:=(V,H)$ be the $2c$--complete hypergraph defined by $V:=\{1,\ldots,n\}$ and $$H:=\{(h_{in},h_{out}): \#h_{in}=\#h_{out}=c,\, h_{in}\cap h_{out}=\emptyset\}.$$ Then, $\lambda_1=0$ and $\lambda_2=\ldots=\lambda_n=n/(n-1)$.
  \end{corollary}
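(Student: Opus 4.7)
The plan is to exploit the full symmetric-group symmetry of this $2c$-complete oriented hypergraph to compute $A$, and hence $L$, explicitly. The definition of $H$ is invariant under any permutation $\sigma\in S_n$ of the vertex labels, so $S_n$ acts on $\Gamma$ by automorphisms. Transitivity on vertices forces $\deg(i)=:d$ to be the same for every $i\in V$, and transitivity on ordered pairs of distinct vertices forces $A_{ij}$ to equal a common value $a$ whenever $i\neq j$. Together with $A_{ii}=0$, this means $A=a(J-I)$, where $J$ denotes the $n\times n$ all-ones matrix. Consequently
$$L=I-\tfrac{1}{d}A=\Bigl(1+\tfrac{a}{d}\Bigr)I-\tfrac{a}{d}J.$$

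Next I would use that $\#h_{in}=\#h_{out}=c$ for every $h\in H$ to conclude that the constant function is harmonic, i.e.\ $L\mathbf{1}=0$ (this characterization of the zero eigenvalue via balanced hyperedges is recalled at the start of Section \ref{section:Cheeger}). Since $A\mathbf{1}=a(n-1)\mathbf{1}$, the identity $L\mathbf{1}=0$ collapses to $a(n-1)=d$, that is $a/d=1/(n-1)$. Substituting back gives the closed form
$$L=\tfrac{n}{n-1}\,I-\tfrac{1}{n-1}\,J.$$

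From this the spectrum is immediate: the matrix $J$ has eigenvalue $n$ on $\mathrm{span}(\mathbf{1})$ and eigenvalue $0$ on $\mathbf{1}^{\perp}$, so $L$ has eigenvalue $0$ on $\mathrm{span}(\mathbf{1})$ and eigenvalue $n/(n-1)$ on the $(n-1)$-dimensional subspace $\mathbf{1}^{\perp}$. Therefore $\lambda_1=0$ with multiplicity one and $\lambda_2=\cdots=\lambda_n=n/(n-1)$, as claimed.

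There is no substantive obstacle in this argument; the only point that deserves care is verifying the symmetry claim, which is immediate from the purely combinatorial and label-symmetric definition of $H$ (together with the implicit assumption $n\ge 2c$, without which $H=\emptyset$ and the running hypothesis on vertex degrees fails). As an alternative one could invoke Proposition \ref{pro:sharp-chromatic} with singleton coloring classes $V_k=\{k\}$ (noting $\chi(\Gamma)=n$ since any two vertices lie in a common hyperedge), then combine Theorem \ref{thm:min-bound-max} with Remark \ref{remark:trace} to force $m_V=1$ and pin down the remaining eigenvalues; but the symmetry-plus-matrix calculation above is shorter and more transparent.
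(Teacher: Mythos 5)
Your argument is correct, but it takes a genuinely different route from the paper. The paper proceeds by verifying the three conditions of Proposition \ref{pro:sharp-chromatic} for the singleton coloring classes $V_k=\{k\}$, which requires explicitly counting $\deg(i)={n-1\choose 2c-1}\frac{1}{2}{2c\choose c}$ and the numbers of hyperedges in which a fixed pair is anti-oriented versus co-oriented; this yields $\lambda_n=n/(n-1)$, after which the harmonicity of the constants and the trace identity $\sum_i\lambda_i=n$ force $\lambda_2=\ldots=\lambda_n=n/(n-1)$. You instead observe that the $S_n$-symmetry of $H$ makes $D=dI$ and $A=a(J-I)$, pin down $a/d=1/(n-1)$ from $L\mathbf{1}=0$ (which, as you note, follows from $\#h_{in}=\#h_{out}$), and read the entire spectrum off the closed form $L=\frac{n}{n-1}I-\frac{1}{n-1}J$. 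Your route avoids all binomial bookkeeping and delivers the exact multiplicity of the eigenvalue $0$ in one stroke, whereas the paper's route illustrates how Proposition \ref{pro:sharp-chromatic} is applied in practice, which is presumably why the authors chose it. Your caveat that $n\ge 2c$ is needed (else $H=\emptyset$ and the standing no-isolated-vertex assumption fails) is well taken and applies equally to the paper's computation, where the binomial coefficients would otherwise vanish. The only point worth spelling out slightly more is the transitivity claim: $S_n$ acts on $\Gamma$ by automorphisms because $\sigma$ sends $(h_{in},h_{out})$ to $(\sigma(h_{in}),\sigma(h_{out}))\in H$ and preserves the anti-/co-oriented status of each pair, and $2$-transitivity of $S_n$ then gives $A_{\sigma(i)\sigma(j)}=A_{ij}$ for all $i\ne j$; this is exactly as immediate as you say, so there is no gap.
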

  
  \begin{proof}
Clearly, $\chi(\Gamma)=n$. Now, for each $k=1,\ldots,n$, let $V_k:=\{k\}$. Then, for all $i$ and $k$:
$$\deg(i)={n-1\choose 2c-1}\frac{{2c\choose c}}{2},$$
$$\#\{h\in H: i\text{ and }h\cap V_k\text{ anti-oriented}\}={n-2\choose 2c-2}\frac{{2c\choose c}}{2}\frac{c}{2c-1},$$
$$\#\{h\in H: i\text{ and }h\cap V_k\text{ co-oriented}\}={n-2\choose 2c-2}\frac{{2c\choose c}}{2}\frac{c-1}{2c-1}.$$
Thus, $(n-1)|\deg(i)$ for all $i$, and 
\begin{align*}
   & \#\{h\in H: i\text{ and }h\cap V_k\text{ anti-or.}\}-\#\{h\in H: i\text{ and }h\cap V_k\text{ co-or.}\}
   \\=~& {n-2\choose 2c-2}\frac{{2c\choose c}}{2}\frac{1}{2c-1}= \frac{1}{n-1} {n-1\choose 2c-1}\frac{{2c\choose c}}{2}
    =\frac{\deg(i)}{n-1}.
\end{align*}
By Proposition \ref{pro:sharp-chromatic}, $\lambda_n=n/(n-1)$. 

Now, observe that $f=1$ is such that $Lf=0$, which means that $\lambda_1=0$ and $f$ is a corresponding eigenfunction. Therefore,
$$n=\sum_{i=1}^n\lambda_i=\sum_{i=2}^n\lambda_i\le (n-1)\lambda_n=n,$$
which implies that $\lambda_2=\ldots=\lambda_n=n/(n-1)$.
  \end{proof}
    \begin{remark}
      For $c=1$, Corollary \ref{cor:c-complete} gives a complete graph of order $n$. For $c\ge 2$, Corollary \ref{cor:c-complete} gives a $2c$--uniform and $\frac12{n-1\choose 2c-1}{2c\choose c}$--regular oriented hypergraph.
    \end{remark}

    \section{Cartesian product of hypergraphs}\label{section:unnormalized}
    In this last section, we define the Cartesian product of hypergraphs and we study the spectrum of the unnormalized Laplacian in this case.
    \begin{definition}
Given two oriented hypergraphs $\Gamma_1=(V_1,H_1)$ and $\Gamma_2=(V_2,H_2)$, their \emph{Cartesian product} $\Gamma:=(V,H)$, denoted $\Gamma_1\,\square\, \Gamma_2$, is defined by letting $V:=V_1\times V_2$ and 
\begin{align*}
    H:=\{&h: h_{in}=\{v\}\times h^2_{in}\, ,\, h_{out}=\{v\}\times h^2_{out}\,\text{ or }\, h_{in}= h^1_{in}\times \{u\},\, \\ &h_{out}=h^1_{out}\times \{u\}, \,
    \text{for some }v\in V_1,u\in V_2, h^i\in H_i\}.
\end{align*}
\end{definition}

\begin{proposition}Let 
\begin{equation*}
    \lambda_1\leq \ldots\leq \lambda_{n_1}\quad\text{and}\quad \mu_1\leq \ldots\leq \mu_{n_2}
\end{equation*}be the spectra of the unnormalized Laplacians for $\Gamma_1=(V_1,H_1)$ and $\Gamma_2=(V_2,H_2)$, respectively, where $n_1=\#V_1$ and $n_2=\#V_2$. Then, the spectrum of the unnormalized Laplacian of the Cartesian product $\Gamma_1\,\square\, \Gamma_2$ is given by
\begin{equation*}
    \lambda_i+\mu_j, \,\text{ for } \,i=1,\ldots,n_1\,\text{ and }\, j=1,\ldots,n_2.
\end{equation*}
\end{proposition}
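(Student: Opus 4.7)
The plan is to decompose the unnormalized Laplacian $\Delta$ of $\Gamma_1\,\square\, \Gamma_2$ as a ``sum'' of the Laplacians of the two factors, and then build a full basis of eigenfunctions by taking tensor products. This follows the template of the corresponding result for graphs.

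First, I would fix a function $f:V_1\times V_2\to\R$ and a vertex $(v,u)\in V$, and use the definition of $\Gamma_1\,\square\,\Gamma_2$ to classify the hyperedges containing $(v,u)$ into two disjoint types: \emph{vertical} hyperedges of the form $\{v\}\times h^2$ with $h^2\in H_2$ and $u\in h^2$, and \emph{horizontal} hyperedges of the form $h^1\times\{u\}$ with $h^1\in H_1$ and $v\in h^1$. For a vertical hyperedge the signed sum appearing in Definition \ref{def:unnormalized-Laplacian} reduces to
\begin{equation*}
\sum_{u'\in h^2_{in}} f(v,u')-\sum_{u'\in h^2_{out}} f(v,u'),
\end{equation*}
which is exactly the signed sum for $h^2$ applied to the slice $f^v:=f(v,\cdot)\in C(V_2)$; the analogous statement holds for horizontal hyperedges and the slice $f_u:=f(\cdot,u)\in C(V_1)$. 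Summing over the two types with the appropriate $\pm$ signs according to whether $(v,u)$ is an input or an output, and observing that being an input of a vertical (resp.\ horizontal) hyperedge corresponds to $u$ being an input of $h^2$ (resp.\ $v$ being an input of $h^1$), I obtain the key identity
\begin{equation*}
\Delta f(v,u) \;=\; \Delta_{\Gamma_1}f_u(v) \;+\; \Delta_{\Gamma_2}f^v(u).
\end{equation*}

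Second, I would exploit this identity via a tensor product construction. Let $\varphi_1,\ldots,\varphi_{n_1}\in C(V_1)$ be eigenfunctions of $\Delta_{\Gamma_1}$ with $\Delta_{\Gamma_1}\varphi_i=\lambda_i\varphi_i$, and let $\psi_1,\ldots,\psi_{n_2}\in C(V_2)$ be eigenfunctions of $\Delta_{\Gamma_2}$ with $\Delta_{\Gamma_2}\psi_j=\mu_j\psi_j$, chosen to form bases of the respective function spaces. Define $\Phi_{i,j}(v,u):=\varphi_i(v)\psi_j(u)$. Applying the identity above to $\Phi_{i,j}$, the slice $(\Phi_{i,j})_u$ is $\psi_j(u)\varphi_i$ and the slice $(\Phi_{i,j})^v$ is $\varphi_i(v)\psi_j$, so
\begin{equation*}
\Delta \Phi_{i,j}(v,u) \;=\; \psi_j(u)\lambda_i\varphi_i(v) + \varphi_i(v)\mu_j\psi_j(u) \;=\; (\lambda_i+\mu_j)\Phi_{i,j}(v,u).
\end{equation*}

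Third, I would conclude by a dimension count. The $n_1 n_2$ functions $\Phi_{i,j}$ are linearly independent in $C(V_1\times V_2)$ (standard, since $\{\varphi_i\}$ and $\{\psi_j\}$ are bases), and $\dim C(V_1\times V_2)=n_1n_2$, so they form a basis of eigenfunctions of $\Delta$. Hence the full spectrum of $\Delta$ is exactly $\{\lambda_i+\mu_j:1\le i\le n_1,\,1\le j\le n_2\}$ counted with multiplicity. The only delicate point is the first step: one must verify carefully that the enumeration of hyperedges of $\Gamma_1\,\square\,\Gamma_2$ containing $(v,u)$ splits cleanly into the two types above and that no mixed contributions arise, but this is immediate from the definition once input/output are tracked correctly. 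The rest is standard linear algebra on $C(V_1)\otimes C(V_2)\cong C(V_1\times V_2)$.
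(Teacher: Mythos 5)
Your proposal is correct and follows essentially the same route as the paper: split the hyperedges of $\Gamma_1\,\square\,\Gamma_2$ through $(v,u)$ into copies of $H_1$-hyperedges and copies of $H_2$-hyperedges, and check that tensor products of eigenfunctions are eigenfunctions with eigenvalue $\lambda_i+\mu_j$. Your explicit final step --- the linear independence of the $n_1n_2$ functions $\Phi_{i,j}$ and the dimension count showing these exhaust the spectrum with multiplicity --- is in fact slightly more complete than the paper, which disposes of this point with a one-line remark.
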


\begin{proof}
Denote by $\Delta(\Gamma_i)$ the unnormalized Laplacian of $\Gamma_i$, for $i=1,2$. Let $f$ be an eigenfunction of $\Delta(\Gamma_1)$ with eigenvalue $\lambda$, that is, $\Delta(\Gamma_1) f=\lambda f$. Similarly, fix $g$ and $\mu$ such that $\Delta(\Gamma_2) g=\mu g$. According to Definition \ref{def:unnormalized-Laplacian},
\begin{equation*}\sum_{h_{in}^1\ni i}\left(\sum_{i'\in h_{in}^1}f(i')-\sum_{i''\in h_{out}^1}f(i'')\right)-\sum_{h_{out}^1\ni i}\left(\sum_{i'\in h_{in}^1}f(i')-\sum_{i''\in h_{out}^1}f(i'')\right)=\lambda f(i)\end{equation*} 
and 
\begin{equation*}\sum_{h_{in}^2\ni j}\left(\sum_{j'\in h_{in}^2}g(j')-\sum_{j''\in h_{out}^2}g(j'')\right)-\sum_{h_{out}^2\ni j}\left(\sum_{j'\in h_{in}^2}g(j')-\sum_{j''\in h_{out}^2}g(j'')\right)=\mu g(j)\end{equation*} 
for all $i=1,\ldots,n_1$ and $j=1,\ldots,n_2$. Now, let $f\otimes g: V_1\times V_2\rightarrow \R$ be defined by
\begin{equation*}
    f\otimes g\, (i,j):=f(i)\cdot g(j).
\end{equation*}Then, for all $(i,j)\in V_1\times V_2$, 
\begin{small}
\begin{align*}
&\Delta (f\otimes g)(i,j)\\
=&
\sum_{h_{in}\ni (i,j)}\left(\sum_{(i',j')\in h_{in}}f\otimes g(i',j')
-\sum_{(i',j')\in h_{out}}f\otimes g(i',j')\right)
\\&-\sum_{h_{out}\ni (i,j)}\left(\sum_{(i',j')\in h_{in}}f\otimes g(i',j')-\sum_{(i',j')\in h_{out}}f\otimes g(i',j')\right)
\\=&\sum_{h_{in}^1\ni i}\left(\sum_{i'\in h_{in}^1}f(i')g(j)-\sum_{i'\in h_{out}^1}f(i')g(j)\right)\\
&+\sum_{h_{in}^2\ni j}\left(\sum_{j'\in h_{in}^2}f(i)g(j')-\sum_{j'\in h_{out}^2}f(i)g(j')\right)
\\&-\sum_{h_{out}^1\ni i}\left(\sum_{i'\in h_{in}^1}f(i')g(j)-\sum_{i'\in h_{out}^1}f(i')g(j)\right)\\
&+\sum_{h_{out}^2\ni j}\left(\sum_{j'\in h_{in}^2}f(i)g(j')-\sum_{j'\in h_{out}^2}f(i)g(j')\right)
\\=&g(j)\left(\sum_{h_{in}^1\ni i}\left(\sum_{i'\in h_{in}^1}f(i')-\sum_{i''\in h_{out}^1}f(i'')\right)-\sum_{h_{out}^1\ni i}\left(\sum_{i'\in h_{in}^1}f(i')-\sum_{i''\in h_{out}^1}f(i'')\right)\right)
\\&+f(i)\left(\sum_{h_{in}^2\ni j}\left(\sum_{j'\in h_{in}^2}g(j')-\sum_{j''\in h_{out}^2}g(j'')\right)-\sum_{h_{out}^2\ni j}\left(\sum_{j'\in h_{in}^2}g(j')-\sum_{j''\in h_{out}^2}g(j'')\right)\right)
\\=&g(j)\lambda f(i)+f(i)\mu g(j)=(\lambda+\mu)(f\otimes g)(i,j).
\end{align*}
\end{small}Hence, $\lambda+\mu$ is an eigenvalue of $\Delta(\Gamma_1\,\square\,\Gamma_2)$, with eigenfunction $f\otimes g$. Since this is true for all such $f$, $\lambda$, $g$ and $\mu$, this proves the claim.
\end{proof}

\section*{Acknowledgment}
We would like to thank J\"urgen Jost for the constructive comments. We are grateful to the anonymous referees for the comments and suggestions that have greatly improved the first version of this paper.

 \bibliographystyle{elsarticle-num} 
 \bibliography{Spectral18.02.2021}

\end{document}